\definecolor{gray}{gray}{0.4}
\title[K3 surfaces with maximal finite automorphism groups]{K3 surfaces with maximal finite automorphism groups 
containing ${\boldsymbol{M_{20}}}$}
\author{C\'edric Bonnaf\'e}
\address{IMAG, Universit\'e de Montpellier, CNRS, Montpellier, France} 
\email{cedric.bonnafe@umontpellier.fr}
\urladdr{http://imag.umontpellier.fr/~bonnafe/}
\author{Alessandra Sarti}
\address{Laboratoire de Math\'ematiques et Applications, UMR CNRS 7348,
			Universit\'e de Poitiers, T\'el\'eport 2, Boulevard Marie et Pierre Curie,
			86962 FUTUROSCOPE CHASSENEUIL, France}
\email{sarti@math.univ-poitiers.fr}
\urladdr{http://www-math.sp2mi.univ-poitiers.fr/~sarti/}
\date{\today}
\thanks{The first author is partly supported by the ANR: 
Projects No ANR-16-CE40-0010-01 (GeRepMod) and ANR-18-CE40-0024-02 (CATORE)}
\newtheorem{theorem}{Theorem}[section]
\newtheorem{pro}[theorem]{Proposition}
\newtheorem{lemma}[theorem]{Lemma}
\newtheorem{cor}[theorem]{Corollary}
\theoremstyle{remark}
\newtheorem{remark}[theorem]{Remark}
\DeclareMathOperator{\diag}{diag}
\DeclareMathOperator{\Km}{Km}
\renewcommand{\IC}{\mathbb{C}}
\newcommand{\IZ}{\mathbb{Z}}
\newcommand{\IP}{\mathbb{P}}
\newcommand{\lra}{\longrightarrow}
\def\kondo{{\mathrm{Ko}}}
\def\mukai{{\mathrm{Mu}}}
\def\bra{{\mathrm{BH}}}
\def\kummer{{\mathrm{Km}}}
\begin{document}

\maketitle

%\todo{Est-ce qu'on change le titre pour mettre $M_{20}$? 
%Par exemple : {\it K3 surfaces with maximal finite automorphism groups 
%containing $M_{20}$}}

\begin{center}
{\it In memory of Laurent Gruson}
\end{center}

\begin{abstract}
It was shown by Mukai that the maximum order of a finite group
acting faithfully and symplectically on a K3 surface is $960$ and that if such 
a group has order $960$, then it is isomorphic to the Mathieu group $M_{20}$. 
Then Kondo showed that the maximum order of a finite group acting faithfully 
on a K3 surface is $3\,840$ and this group contains 
$M_{20}$ with index four. Kondo also showed that there is a unique K3 surface
on which this group acts faithfully, which is the Kummer surface $\Km(E_i\times E_i)$.
In this paper we describe two more K3 surfaces admitting a big finite automorphism group
of order $1\,920$, both groups contains $M_{20}$ as a subgroup of index 2. We show moreover that
these two groups and the two K3 surfaces are unique. 
This result was shown independently 
by S. Brandhorst and K. Hashimoto in a forthcoming
paper, with the aim of classifying all the finite groups 
acting faithfully on K3 surfaces with maximal symplectic part. 
\end{abstract}

%\maketitle

\bigskip

\section{Introduction}
A K3 surface is a compact complex surface
which is simply connected and has trivial canonical bundle. 
Given a finite group $\G$ acting on a K3 surface $X$ 
we have an exact sequence
$$
1\lra \G_0\lra \G\lra \mathbb{Z}/m\mathbb{Z}\lra 1
$$
where the last map is induced by the action on the nowhere vanishing holomorphic 2-form $\omega_X$. 
The group $\G_0$ is  the normal subgroup of maximal order contained in $\G$ whose 
automorphisms act trivially
on $\omega_X$. The automorphisms of $\G_0$ are called {\it symplectic}. 
%The quotient $\G/\G_0\cong \mathbb{Z}/m\mathbb{Z}$
%is hence a cyclic group, and we say that its generator, acts {\it purely non--symplectically} on $X$.
It was shown by Mukai~\cite[Theorem 0.3]{mukai} that, if $G$ is a finite group 
acting faithfully and symplectically on a K3 surface, then $|G| \le  960$ and, 
if $|G|=960$, then $G$ is isomorphic to the Mathieu group 
$M_{20}$. In his paper Mukai gives the example
of a K3 surface with such an action, we recall this example 
in section~\ref{sec:g29}.
More generally, it is an interesting question to classify maximal finite groups 
$\G$ acting faithfully on a K3 surface. More precisely we say that $\G$ is a 
{\it maximal finite group} acting faithfully 
on a K3 surface if the following holds: assume $\G'$ is another finite group acting 
faithfully on a K3 surface
then $\G$ is not (isomorphic to) a proper subgroup of $\G'$. 

In Theorem~\ref{max} we show that there 
are only three finite groups $\G$ containing 
strictly $\Gamma_0=M_{20}$ as the normal subgroup of $\G$ acting faithfully and 
symplectically and only three K3 surfaces acted on by such a $\G$, 
the main ingredient of the proof is Theorem~\ref{involutions}. 
This result is shown also independently in a forthcoming paper of S. Brandhorst 
and K. Hashimoto~\cite{BH},
where they compute all the finite groups 
acting faithfully on K3 surfaces with maximal symplectic part. 
In our situation one of the three K3 surfaces mentioned above was 
constructed by Kondo~\cite{kondo} (this is the only K3 surface 
acted on faithfully by a finite group of order $3~\!840=4 \cdot |M_{20}|$), 
another one was constructed by Mukai~\cite{mukai}, 
 and the existence of the last one was showed by Brandhorst-Hashimoto 
 in {\it loc. cit.}, we give here explicit equations. 
In the second and in the third case the order of $\G$ is equal to  $2 \cdot |M_{20}|$. 
We denote these three surfaces respectively by 
$X_\kondo$, $X_\mukai$ and $X_\bra$. In this note, we  compute the transcendental lattice of these three 
K3 surfaces. This was done by Kondo for the surface $X_\kondo$, we recall it here 
to have a complete picture, and we compute it for $X_\mukai$ and $X_\bra$. Accordingly to \cite[Section 3]{dolga_quartic}
the transcendental lattice
of $X_\mukai$ was already known by Mukai, but we could not find explicit computations, so we give it here. 
We give also equations for the three
surfaces. Mukai already provided equations for $X_\mukai$ as a 
smooth quartic surface in $\IP^3(\IC)$ (which is the {\it  Maschke surface}, see \cite[Section 3]{dolga_quartic})  
we compute it here in a different way, but
we show that up to a projective transformation of $\IP^3(\IC)$, these are equivalent. 

The equations for 
$X_{\kondo}$ and $X_\bra$ are new. In particular one gets easily a (singular) equation for the first one as
a complete intersection of two quartics in weighted projective space $\IP(1,1,2,2,2)$ by using
a result of Inose, \cite{inose}. To get the equations for $X_\bra$ one needs a more careful 
study of 
the action of $M_{20}$ on the projective space $\IP^5(\IC)$. It turns out that $X_\bra$ is a smooth
complete intersection of three quadrics and we give here the equations (this answers a question of S. Brandhorst 
to the authors). All these three K3 surfaces turn out to be
Kummer surfaces of abelian surfaces that are the product of two elliptic
curves, see Corollary \ref{cor:parity}. By using results of Shioda and Mitani \cite{shiodamitani} we compute explicitly 
the two elliptic curves. We have that
\begin{eqnarray*}
\begin{array}{c}
X_\kondo\cong \kummer(E_i\times E_i),\qquad 
X_\mukai\cong \kummer(E_{i\sqrt{10}}\times E_{i\sqrt{10}}),\\ 
\\
X_\bra\cong \kummer(E_{\tau}\times E_{2\tau}),\quad\mbox{with}\, 
\tau=\frac{-1+i\sqrt{5}}{2}.\\
\end{array}
\end{eqnarray*}
Here, $E_z$ denotes the elliptic curve with complex multiplication 
given by $z$. 
For the example of $X_\bra$, we also obtain in Remark~\ref{curves bra} 
an explicit Nikulin configuration of $16$ disjoint smooth rational curves 
(we are not able 
to obtain such an explicit configuration for $X_\mukai$: see Remark~\ref{rem:coniques dans q4}).
% In Remarks~\ref{rem:coniques dans q4} and~\ref{curves bra} we discuss sets of disjoint rational curves on $X_\mukai$ and $X_\bra$.

\bigskip

\noindent{\bf Acknowledgements:} We warmly thank Simon Brandhorst for 
letting us know about his forthcoming paper with Kenji Hashimoto~\cite{BH}, for very 
useful discussions, and for a careful reading of a preliminary version of this 
work, J\'er\^ome Germoni who helped us finding the 16 disjoint 
smooth rational curves on $X_\bra$ thanks to his explanations on cliques, 
Xavier Roulleau for stimulating comments on an early version of the paper 
and the MSRI for letting the first author use his computing 
facilities: many hidden computations were done using {\sc Magma}~\cite{magma}. 
We finally thank the anonymous referee for a careful reading of the paper and for useful comments.

\vspace{1.0cm}

\noindent{\sc Notation - } 
If $G$ is a group, we denote by $G'$ its commutator subgroup (also sometimes called 
derived subgroup) 
and by $\Zrm(G)$ its center. If $V$ is a vector space, we denote by 
$\CM[V]$ the algebra of polynomial functions on $V$ and, if $k \ge 0$, 
we denote by $\CM[V]_k$ its homogeneous component of degree $k$. 
If $f_1$,\dots, $f_r \in \CM[V]$ are homogeneous, we denote by $\ZC(f_1,\dots,f_r)$ 
the associated scheme of $\PM(V)$, defined by $f_1=\cdots=f_r=0$. 
If $G$ is a subgroup of $\Gb\Lb_\CM(V)$, we denote by $PG$ its image 
in $\Pb\Gb\Lb_\CM(V)$. If $V=\CM^n$, we identify naturally $\Gb\Lb_\CM(V)$ 
and $\Gb\Lb_n(\CM)$. 
We denote by $M_{20}$ the Mathieu group of order $960$.

If $\t \in \CM$ has a positive imaginary part, we denote by $E_\t$ 
the elliptic curve $\CM/(\ZM \oplus \ZM\t)$. If $A$ is an abelian surface, 
we denote by $\kummer(A)$ its associated Kummer surface. 
We denote by $\Lb$ the {\it K3 lattice} 
$E_8(-1) \oplus E_8(-1) \oplus U \oplus U \oplus U$, 
where $U$ is the hyperbolic plane and $E_8(-1)$ is the lattice 
$E_8$ endowed with the opposite quadratic form. 
If $X$ is a K3 surface, we denote by $\Lb_X$ the lattice 
$\Hrm^2(X,\ZM)$ (it turns out that $\Lb_X \simeq \Lb$) 
and by $\Tb_X$ its transcendental lattice (i.e. the orthogonal, 
in $\Lb_X$, of its N\'eron-Severi group). Finally, 
we denote by $\Lb_{20}$ the lattice 
$$\Lb_{20}=
\begin{pmatrix}
4&0&-2\\
0&4&-2\\
-2&-2&12\\
\end{pmatrix}.$$
See the Proposition~\ref{prop:l20} below 
for the reason for this notation. 

\medskip

\section{K3 surfaces with a faithful action of $M_{20}$}

\medskip

We gather in this section some properties of the K3 surfaces 
admitting a faithful action of the finite group $M_{20}$ (since $M_{20}$ is equal 
to its commutator subgroup, this is necessarily a symplectic action), and we prove 
the main result of this paper, namely a classification of K3 surfaces admitting a 
faithful action of a finite group containing strictly $M_{20}$. 

If we consider all the K3 surfaces $X$ that admit a faithful symplectic 
action of $M_{20}$, Xiao~\cite[Nr. 81, Table 2]{xiao} proved that 
the minimal resolution of the quotient of 
$X$ by $M_{20}$ is a K3 surface with Picard number 20. By a result of 
Inose~\cite[Corollary 1.2]{inose}, 
this means also that $X$ has Picard number 20. This shows 
the following, with the same notation as before:

\medskip

\begin{pro}
There are at most countably many K3 surfaces with a 
faithful symplectic action by $M_{20}$.
\end{pro}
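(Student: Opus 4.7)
The statement follows directly by chaining together the two results cited just before the proposition with the classical classification of singular K3 surfaces, so the proof is very short and conceptual; the plan is mainly to spell out which facts are invoked and in what order.

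First I would invoke Xiao's classification \cite[Nr.~81, Table~2]{xiao}: for any K3 surface $X$ with a faithful symplectic action of $M_{20}$, the minimal resolution of $X/M_{20}$ is a K3 surface of Picard number $20$. Next I would apply Inose's result \cite[Corollary~1.2]{inose}, which says that if the minimal resolution of the quotient of a K3 surface $X$ by a finite group of symplectic automorphisms is again a K3 surface of Picard number $20$, then $X$ itself has Picard number $\rho(X)=20$ (i.e.\ $X$ is a singular K3 surface in the sense of Shioda--Inose).

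The last step is to recall that singular K3 surfaces form at most a countable family. Indeed, for such an $X$ the transcendental lattice $\Tb_X$ is a positive definite even integral lattice of rank~$2$, and by the Shioda--Inose theorem the map $X \mapsto \Tb_X$ induces an injection from isomorphism classes of singular K3 surfaces into the set of $\Grm\Lrm_2(\ZM)$-equivalence classes of such rank-$2$ lattices, a set which is manifestly countable (since each class has an integral Gram matrix). Combining this with the previous two paragraphs produces the desired bound on the cardinality of the family.

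I do not anticipate a real obstacle: every ingredient is an off-the-shelf theorem cited in the paper or standard K3 theory, and no delicate estimate is required. The only thing to be slightly careful about is to note that the action of $M_{20}$ is automatically symplectic (as already recorded just above the proposition, since $M_{20}$ is perfect, so the induced character on $\omega_X$ is trivial), so that Xiao's table indeed applies.
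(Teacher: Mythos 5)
Your proof is correct and follows essentially the same route as the paper: Xiao's table plus Inose's corollary give Picard number $20$, and then countability follows. The paper simply says that the relevant moduli space is $0$-dimensional at that point, whereas you make this final step explicit via the Shioda--Inose classification of singular K3 surfaces by their transcendental lattices; this is a more detailed rendering of the same argument, not a different one.
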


\medskip

\begin{proof}
Since the Picard number is 20, then the moduli space of K3 surfaces with a 
faithful symplectic $M_{20}$-action is 0-dimensional. 
\end{proof}

\medskip

\begin{remark}
Observe that the automorphism group of a K3 surface with  
Picard number $20$ is infinite~\cite[Theorem~5]{shiodainose}. Shioda and Inose 
show it by exhibiting an elliptic fibration with an infinite order section, 
this gives an automorphism acting symplectically on the K3 surface with 
infinite order.\finl
\end{remark}

\medskip

% We denote by $\Lb$ the {\it K3 lattice} 
% $E_8(-1) \oplus E_8(-1) \oplus U \oplus U \oplus U$, 
% where $U$ is the hyperbolic plane and $E_8(-1)$ is the lattice 
% $E_8$ endowed with the opposite quadratic form. 
% If $X$ is a K3 surface, we denote by $\Lb_X$ the lattice 
% $\Hrm^2(X,\ZM)$ (it turns out that $\Lb_X$ is isometric to $\Lb$) 
% and by $\Tb_X$ its transcendental lattice. Finally, 
% we denote by $\Lb_{20}$ the lattice 
% $$\Lb_{20}=
% \begin{pmatrix}
% 4&0&-2\\
% 0&4&-2\\
% -2&-2&12\\
% \end{pmatrix}.$$
Recall the following result~\cite[proof of Proposition~2.1]{kondo}:

\medskip

\begin{pro}\label{prop:l20}
Let $X$ be a K3 surface with a faithful symplectic action by $M_{20}$. Then 
the invariant lattice $\Lb_X^{M_{20}}$ is isometric to $\Lb_{20}$.
\end{pro}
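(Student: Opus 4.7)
The plan is to combine a character-theoretic determination of the rank and signature of $\Lb_X^{M_{20}}$ with an explicit computation on a convenient model, and to use a uniqueness statement to transfer the conclusion back to an arbitrary $X$.

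First, I pin down the abstract shape of $\Lb_X^{M_{20}}$. Since the preceding Proposition gives Picard number $20$, the transcendental lattice $\Tb_X$ is positive definite of rank $2$. The action of $M_{20}$ being symplectic fixes the class of $\omega_X$, hence acts trivially on the primitive sublattice $\Tb_X$; so $\Tb_X \hookrightarrow \Lb_X^{M_{20}}$. Combining Nikulin's counts of fixed points for symplectic automorphisms of prime order with the holomorphic Lefschetz formula, the character of the $M_{20}$-representation on $\Lb_X \otimes \QM$ is determined purely by the orders of the elements of $M_{20}$, and averaging yields
$$\rk \Lb_X^{M_{20}} = \frac{1}{|M_{20}|}\sum_{g \in M_{20}} \tr\bigl(g \mid \Lb_X\bigr) = 3.$$
Adjoining any $M_{20}$-invariant ample class (obtained e.g.\ by averaging an ample class over the $M_{20}$-orbit) produces a class of positive square, so $\Lb_X^{M_{20}}$ is positive definite of signature $(3,0)$, matching that of $\Lb_{20}$.

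Second, I argue that the isometry class of $\Lb_X^{M_{20}}$ is independent of $X$. By results going back to Mukai and sharpened by Hashimoto, the faithful symplectic action of $M_{20}$ on the K3 lattice $\Lb$ is unique up to conjugacy in $O(\Lb)$; hence the invariant sublattice $\Lb^{M_{20}}$ is intrinsically defined, and does not depend on which K3 surface realises the action.

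Third, it therefore suffices to compute the invariant lattice on one specific model, and the natural choice is the Kummer surface $X = \kummer(E_i \times E_i)$, for which the $M_{20}$-action is described explicitly in Kondo~\cite{kondo}. There one writes down two generators of $\Tb_X$ coming from the two Gaussian factors (each contributing a class of square $4$, orthogonal to each other) together with an explicit $M_{20}$-invariant polarization of square $12$, computes their pairwise intersection numbers, and after a mild change of basis recovers the Gram matrix
$$\begin{pmatrix} 4 & 0 & -2 \\ 0 & 4 & -2 \\ -2 & -2 & 12 \end{pmatrix}$$
defining $\Lb_{20}$. The main obstacle lies in the second step: justifying that the abstract $M_{20}$-action on $\Lb$ is unique up to conjugacy is a delicate lattice-theoretic/representation-theoretic statement. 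Once this is granted, the rest of the argument is a bookkeeping computation on the Kummer example.
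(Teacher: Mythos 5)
Note first that the paper does not actually prove this proposition: it is quoted from the proof of Proposition~2.1 of Kondo's paper, so there is no internal argument to measure yours against. Your first step (rank $3$ by averaging traces, signature $(3,0)$ by adjoining an invariant ample class to the positive definite $\Tb_X$) is correct and standard, though it becomes redundant once your steps two and three are granted. The problem is that neither of those steps is actually carried out, and the second contains the entire difficulty of the proposition. Asserting that the symplectic $M_{20}$-action on $\Lb$ is unique up to conjugacy in $O(\Lb)$ is not a citable footnote here: Hashimoto's theorem gives uniqueness of the \emph{coinvariant} lattice as an $M_{20}$-lattice, with a short list of exceptional groups, so you would need to (i) check that $M_{20}$ is not among the exceptions and (ii) separately establish that the primitive embedding of the rank-$19$ coinvariant lattice into $\Lb$ is unique up to $O(\Lb)$ (a Nikulin-type statement) before the invariant lattice, its orthogonal complement, is pinned down. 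Without both points the reduction to a single model is unjustified.

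Your third step is also wrong as described. On $\kummer(E_i\times E_i)=X_\kondo$ there is no $M_{20}$-invariant polarization of square $12$: by Kondo's Lemma~3.1 (Proposition~\ref{quartic_kondo} of this paper) one has $\NS(X_\kondo)^{M_{20}}=\ZM L_{40}$ with $L_{40}^2=40$, while $\Tb_{X_\kondo}\cong\diag(4,4)$. The lattice $\Lb_{20}$ is the index-$2$ overlattice of $\langle 4\rangle\oplus\langle 4\rangle\oplus\langle 40\rangle$ obtained by adjoining the glue vector $\tfrac{1}{2}(e+f+L_{40})$; that is precisely your class of square $12$, and it lies in neither $\NS$ nor $\Tb$. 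Showing that the invariant lattice really is a proper overlattice of $\Tb_X\oplus\NS(X_\kondo)^{M_{20}}$ (rather than equal to $\diag(4,4,40)$) and identifying which even overlattice occurs is the actual content of the computation, and your sketch omits it entirely. As written, the proposal defers both hard steps to external references, which is what the paper does more economically by citing Kondo outright.
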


\medskip

\begin{remark}\label{rem:l20}
Note that $\Lb_{20}$ has signature $(3,0)$, so its isometry group 
is finite. Let us recall its description. Let 
$$\r_1=\begin{pmatrix} 0 & 1 & 0 \\ 1 & 0 & 0 \\ 0 & 0 & 1 \end{pmatrix}
\qquad\text{and}\qquad 
\r_2=\begin{pmatrix} 1 & 0 & -1 \\ 0 & -1 & 0 \\ 0 & 0 & -1 \end{pmatrix}.$$
Then $\r_1$ and $\r_2$ belong to the group of isometries of $\Lb_{20}$ and 
it is easily checked that the group of isometries of $\Lb_{20}$ 
is generated by $\r_1$, $\r_2$ and $-\Id_{\Lb_{20}}$ (by using 
for instance the upcoming Lemma~\ref{lem:plongements}) 
and has order $16$ (see also~\cite[Proposition~2.1]{kondo}).\finl
\end{remark}

\medskip

\begin{cor}\label{cor:parity}
If a K3 surface $X$ admits a faithful action by the group 
$M_{20}$ then $X=\kummer(A)$ for a unique abelian surface $A$,
which is the product of two elliptic curves.
\end{cor}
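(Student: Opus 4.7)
The plan is to combine the arithmetic of the invariant lattice $\Lb_{20}$ with Nikulin's criterion for Kummer surfaces and the Shioda--Mitani classification of singular abelian surfaces. Since $M_{20}$ is perfect, any faithful action of $M_{20}$ on a K3 surface is automatically symplectic (the cyclic quotient in the exact sequence of the introduction must be trivial), so the preceding results apply: $X$ has Picard number $20$ by the first proposition of this section combined with Xiao and Inose, the transcendental lattice $\Tb_X$ is a positive definite even lattice of rank $2$, and $\Lb_X^{M_{20}} \simeq \Lb_{20}$ by Proposition~\ref{prop:l20}.

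Because $M_{20}$ acts trivially on $\omega_X$ and by isometries on $\Lb_X$, it acts trivially on $\Tb_X$, and so the primitive embedding $\Tb_X \hookrightarrow \Lb_X$ factors through $\Lb_X^{M_{20}} \simeq \Lb_{20}$. The crucial observation is purely lattice-theoretic: one can write $\Lb_{20} = L(2)$, where $L$ has Gram matrix $\left(\begin{smallmatrix} 2 & 0 & -1 \\ 0 & 2 & -1 \\ -1 & -1 & 6 \end{smallmatrix}\right)$, and $L$ itself is even (its diagonal entries $2,2,6$ are all even). Consequently, every pairing $v \cdot w$ in $\Lb_{20}$ lies in $2\ZM$, and $v \cdot v$ lies in $4\ZM$. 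Restricting to $\Tb_X$, the Gram matrix in any basis has only even entries and even diagonal divided by $2$, which forces $\Tb_X = T(2)$ for some even positive definite rank-$2$ lattice $T$.

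At this point Nikulin's characterization of Kummer surfaces applies: a K3 surface whose transcendental lattice is of the form $T(2)$ with $T$ an even lattice is necessarily isomorphic to $\kummer(A)$ for some abelian surface $A$ satisfying $\Tb_A \simeq T$, and $A$ is unique up to isomorphism since it is singular (Picard number $4$) and hence determined by its transcendental lattice. The Shioda--Mitani theorem then identifies any such $A$ with a product $E_{\tau_1} \times E_{\tau_2}$ of two CM elliptic curves, giving the desired description and uniqueness.

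The step I expect to be the main obstacle is invoking Nikulin's criterion in exactly the right form (and reading off the uniqueness of $A$ from the transcendental data), since this is the only non-explicit ingredient; the key algebraic observation $\Lb_{20} = L(2)$ with $L$ even is an immediate verification on the Gram matrix, and the application of Shioda--Mitani is standard once $T$ has been identified.
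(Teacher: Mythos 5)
Your argument is correct and essentially identical to the paper's: both hinge on the observation that $\Lb_{20}$ is twice an even lattice, so that any basis of $\Tb_X\subset \Lb_X^{M_{20}}\simeq\Lb_{20}$ has Gram matrix $\left(\begin{smallmatrix}4a&2b\\2b&4c\end{smallmatrix}\right)$, i.e.\ $\Tb_X\simeq T(2)$ with $T$ even, and both conclude via Shioda--Mitani. The only difference is one of packaging: where you invoke a general ``Kummer criterion'' (which, as you state it, is false without a large Picard number hypothesis --- it needs $\rho(X)\ge 17$, satisfied here since $\rho(X)=20$), the paper instead writes down $A=E_{\tau_1}\times E_{\tau_2}$ explicitly from $(a,b,c)$ using Shioda--Mitani's formulas so that $\Tb_A=T$, and then uses $\Tb_{\kummer(A)}=\Tb_A(2)=\Tb_X$ together with the fact that singular K3 surfaces are determined by their transcendental lattices, with uniqueness of $A$ from Shioda--Mitani's Theorem~5.1.
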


\medskip

\begin{proof}
Let $(u,v)$ be a $\ZM$-basis of $\Tb_X \subset \Lb_X^{M_{20}}$. 
By Proposition~\ref{prop:l20}, we have $u^2$, $v^2 \in 4\ZM$ 
and $u \cdot v \in 2\ZM$. So
$$
\Tb_X \simeq \left(\begin{array}{cc}
4a&2b\\
2b&4c\\
\end{array}
\right).
$$
Following~\cite[Section~3]{shiodamitani}, we set $A\cong E_{\t_1} \times E_{\t_2}$ where 
$$
\tau_1=\frac{-b+\sqrt{\Delta}}{2a},\qquad \tau_2=\frac{b+\sqrt{\Delta}}{2}
$$
and $\Delta=b^2-4ac$, so that 
$$
\Tb_A:=\left(\begin{array}{cc}
2a&b\\
b&2c\\
\end{array}
\right).
$$
Hence $\Tb_X=\Tb_A(2)=\Tb_{\kummer(A)}$.

The uniqueness follows from~\cite[Theorem~5.1]{shiodamitani}.
\end{proof}

\bigskip

\begin{remark}\label{rem:indec}
Let us prove here that $\Lb_{20}$ is indecomposable. %\todo{C'est peut-etre bien connu, il faudrait essayer de chercher une reference} 
Assume that it is not indecomposable. Then 
$\Lb_{20}=L_1 \DS{\mathop{\oplus}^\perp} L_2$, 
where $L_1$ has rank $1$ and $L_2$ has rank $2$. By the proof of the 
Corollary~\ref{cor:parity}, we have $L_1 = \langle 4n \rangle$ for some 
$n \ge 0$ and 
$$L_2=\begin{pmatrix} 4a & 2b \\ 2b & 4c \end{pmatrix}$$
for some $a$, $b$, $c$ $\in\mathbb{Z}$. Then 
$160={\mathrm{disc}}(\Lb_{20})={\mathrm{disc}}(L_1){\mathrm{disc}}(L_2)
= 16n(4ac-b^2)$. 
In other words, $10=n(4ac-b^2)$, which means that $4ac-b^2 \in \{1,2,5,10\}$.
But $b^2 \equiv 0$ or $1 \mod 4$, so $4ac-b^2 \equiv 3$ or $4 \mod 4$. 
This leads to a contradiction.\finl
\end{remark}

%\todo{maybe cite here a thm of S. Brandhorst if there is (Cedric's comment: ok)}

% Observe that the K3 surfaces $X_\mukai$ and $X_\bra$ are the unique
% K3 surfaces with an action of the two index two extensions  $G_\mukai$ respectively
% $G_\bra$ of $M_{20}$. We show the following

Our main result in this paper is the following:

\medskip

\begin{theorem}\label{involutions}
Assume that $M_{20}$ acts faithfully on a K3 surface $X$, and assume moreover that $X$
admits a non--symplectic automorphism $\iota$
acting on it, normalizing $M_{20}$ and such that $\iota^2 \in M_{20}$.
We set $G=\langle \iota \rangle M_{20}$.
Then we have the following three possibilities for
the $G$-invariant N\'eron-Severi group of $X$ and its transcendental lattice:
\begin{enumerate}
\itemth{1} $\langle 40 \rangle, \qquad \left(\begin{array}{cc}
4&0\\
0&4\\
\end{array}
\right)$

\itemth{2} $\langle 4 \rangle, \qquad \left(\begin{array}{cc}
4&0\\
0&40\\
\end{array}
\right)$

\itemth{3} $\langle 8 \rangle, \qquad \left(\begin{array}{cc}
8&4\\
4&12\\
\end{array}
\right)$
\end{enumerate}
All the three cases are possible and are described in the sections \ref{sec:kondo},
\ref{sec:g29}, \ref{sec:q8}.
\end{theorem}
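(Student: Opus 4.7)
The strategy is to translate the statement into a lattice-theoretic classification of involutions on the rank-$3$ lattice $\Lb_X^{M_{20}} \simeq \Lb_{20}$ provided by Proposition~\ref{prop:l20}. First I would observe that, since $\iota$ normalizes $M_{20}$, it acts on $\Lb_X^{M_{20}}$; and since $\iota^2 \in M_{20}$ acts trivially on this invariant sublattice, $\iota$ induces an isometry of order at most $2$ on $\Lb_{20}$. Moreover $\iota^2$ is symplectic while $\iota$ is not, so $\iota$ acts as $-1$ on $H^{2,0}(X) = \CM \omega_X$. As $T_X \subset \Lb_X^{M_{20}}$ has rank $2$ and $T_X \otimes \CM = H^{2,0}(X) \oplus H^{0,2}(X)$, this forces $\iota$ to act as $-\id$ on $T_X$; in particular $\iota|_{\Lb_{20}}$ is a non-trivial involution.

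Next, because $X$ is projective (its Picard number is $20$) and $G$ is finite, averaging an ample class produces a non-zero $G$-invariant class in $\NS(X) \otimes \QM$, so $\NS(X)^G$ has positive rank. On the other hand $\NS(X)^{M_{20}} = \NS(X) \cap \Lb_{20}$ coincides with the orthogonal complement of $T_X$ inside $\Lb_{20}$, hence has rank exactly $1$. It follows that $\iota$ fixes a primitive generator $h$ of $\NS(X)^{M_{20}}$, and that $\iota|_{\Lb_{20}}$ is precisely the involution with $+1$-eigenspace $\langle h \rangle$ and $-1$-eigenspace $T_X = \langle h \rangle^{\perp} \subset \Lb_{20}$; in particular it has trace $-1$. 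The $G$-invariant N\'eron-Severi lattice is then $\langle h \rangle \simeq \langle h^2 \rangle$, while the transcendental lattice is $T_X = \langle h \rangle^{\perp}$.

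The classification thus reduces to enumerating involutions of trace $-1$ in the isometry group of $\Lb_{20}$. By Remark~\ref{rem:l20} this group has order $16$ and is generated by $\rho_1$, $\rho_2$ and $-\id$; a direct check shows $\rho_1 \rho_2$ has order $4$, so the group is isomorphic to $D_4 \times \{\pm \id\}$. Of its eleven involutions, exactly five have trace $-1$, namely $(\rho_1 \rho_2)^2$, $\rho_2$, $\rho_1 \rho_2 \rho_1$, $-\rho_1$ and $-\rho_1 (\rho_1 \rho_2)^2$. For each of them the $+1$-eigenspace is spanned by an explicit primitive vector $h \in \Lb_{20}$ of square $40$, $4$, $4$, $8$ and $8$ respectively, yielding three distinct values of $h^2$. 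Computing the orthogonal complement $\langle h \rangle^{\perp}$ in a suitable $\ZM$-basis and performing a mild basis change then recovers precisely the Gram matrices for $T_X$ listed in cases (1), (2) and (3).

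Existence of each case is settled by the explicit K3 surfaces constructed in Sections~\ref{sec:kondo}, \ref{sec:g29} and \ref{sec:q8}. I would expect the main obstacle to be the enumeration step in the third paragraph: it is purely finite but requires a careful listing of the eleven involutions in the order-$16$ group, identification of the five of trace $-1$, and bringing each orthogonal complement into the canonical form of the statement. Throughout, the discriminant identity $h^2 \cdot \disc(T_X) = 4 \disc(\Lb_{20}) = 640$, reflecting the fact that $\langle h \rangle \oplus T_X$ sits in $\Lb_{20}$ with index $2$, provides a convenient consistency check.
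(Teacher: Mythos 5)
Your proposal is correct, and it reaches the classification by a genuinely different route than the paper. You reduce everything to listing the trace~$-1$ involutions in the full isometry group of $\Lb_{20}$, which by Remark~\ref{rem:l20} is $\langle \r_1,\r_2\rangle\times\{\pm\Id\}\cong D_4\times\ZM/2\ZM$ of order $16$; your count (eleven involutions, five of trace $-1$, falling into three conjugacy classes with fixed vectors $e+f+2h$, $e$ or $f$, and $e\pm f$, of squares $40$, $4$, $8$) checks out, and the orthogonal complements do give the three Gram matrices of the statement. The paper instead proves that $\Lb_X^G\oplus\Tb_X$ has index exactly $2$ in $\Lb_{20}$ (Lemma~\ref{lem:indice-2}, using the indecomposability of $\Lb_{20}$ and Nikulin's theory), turns this into the discriminant identity $n(4ac-b^2)=40$, runs a parity analysis on $b$, and then excludes the residual case $n=5$ by showing that $\Lb_{20}$ contains no orthogonal pair of primitive vectors of squares $4$ and $8$. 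The trade-off is this: your argument is more uniform and makes the index-$2$ fact a corollary rather than an input, but it requires the full determination of the order-$16$ isometry group, which the paper only asserts in Remark~\ref{rem:l20}; to be self-contained you would need to verify that claim (e.g.\ by noting that the vectors of square $4$ are exactly $\pm e,\pm f$ and that $h$ is then determined up to two choices, as in Lemma~\ref{lem:plongements}). The paper's argument needs only the uniqueness of the primitive embeddings of $\langle 4\rangle$, $\langle 8\rangle$ and $\langle 40\rangle$. Both arguments defer existence to the explicit surfaces of Sections~\ref{sec:kondo}, \ref{sec:g29} and \ref{sec:q8}, as does the paper.
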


\medskip

\begin{proof}
We only prove here the fact that the N\'eron-Severi group of $X$ 
and its transcendental lattice is necessarily one of the given 
three forms: the existence of the three examples 
will be shown in the upcoming sections (and we will add some geometric 
features of those examples). We first need two technical lemmas:

\begin{quotation}
\begin{lemma}\label{lem:plongements}
Up to isometry, there is a unique embedding of the lattice $\langle 4 \rangle$ 
(resp. $\langle 8 \rangle$, resp. $\langle 40 \rangle$) as a primitive 
sublattice of $\Lb_{20}$. 
\end{lemma}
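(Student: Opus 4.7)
The proof will proceed by direct enumeration. Since $\Lb_{20}$ is positive definite of rank $3$, its isometry group $O(\Lb_{20})$ is finite and, by Remark~\ref{rem:l20}, has order $16$, generated by $\rho_1$, $\rho_2$ and $-\Id_{\Lb_{20}}$. Proving the lemma amounts to showing, for each $n\in\{4,8,40\}$, that $O(\Lb_{20})$ acts transitively (up to sign) on the set of primitive vectors of norm $n$ in $\Lb_{20}$; the corresponding primitive rank-one sublattice is then unique up to isometry.

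First I would complete the square on the quadratic form $q$ associated to the Gram matrix of $\Lb_{20}$, obtaining the diagonalization
$$q(x,y,z)=(2x-z)^2+(2y-z)^2+10z^2.$$
Setting $a=2x-z$ and $b=2y-z$, the equation $q(x,y,z)=n$ becomes $a^2+b^2+10z^2=n$ with the parity constraint $a\equiv b\equiv z\pmod 2$. Since $10z^2\le n$, only finitely many values of $z$ need be tested, and for each such $z$ the equation $a^2+b^2=n-10z^2$ has a handful of solutions which one enumerates by inspection.

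For $n=4$ only $z=0$ contributes, yielding the four vectors $\pm e_1,\pm e_2$; they are all primitive, and the generator $\rho_1$ swaps $e_1$ and $e_2$, which together with $-\Id$ gives a single orbit. For $n=8$, again only $z=0$ contributes, producing the four primitive vectors $\pm(e_1\pm e_2)$, lying in a single orbit under $\rho_1$, $\rho_2$ and $-\Id$. For $n=40$ both $z=0$ and $z=\pm 2$ contribute; the canonical representative is $(1,1,2)$, and a short check shows that its orthogonal complement in $\Lb_{20}$ is $\mathbb{Z}e_1\oplus\mathbb{Z}e_2\simeq\langle 4\rangle\oplus\langle 4\rangle$ (which will match the transcendental lattice in case (1) of Theorem~\ref{involutions}). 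Transitivity is then verified by exhibiting explicit elements of $O(\Lb_{20})$ that carry each candidate vector (or its opposite) to $(1,1,2)$.

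I expect the main difficulty to lie in the $n=40$ case: the enumeration is longer than for $n=4,8$, there is now a genuine contribution from $z\ne 0$, and the orbit analysis under the order-$16$ group requires the most care. By contrast, for $n=4$ and $n=8$ the small number of candidates makes uniqueness essentially immediate once the enumeration has been carried out.
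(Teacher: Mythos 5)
Your treatment of $\langle 4\rangle$ and $\langle 8\rangle$ is exactly the paper's: the same diagonalization $q=(2x-z)^2+(2y-z)^2+10z^2$, the same four primitive vectors in each case, and the same one-orbit check under $\rho_1$, $\rho_2$ and $-\Id_{\Lb_{20}}$. That part is correct. The paper, however, does \emph{not} carry out the enumeration for $\langle 40\rangle$ --- it cites Kondo for that case --- and your plan to fill it in by the same method hits a genuine obstruction.

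Concretely, for $n=40$ your own enumeration produces \emph{ten} primitive vectors, not just $\pm(1,1,2)$: the case $z=0$ gives $x^2+y^2=10$, i.e.\ the eight primitive vectors $(\pm1,\pm3,0)$ and $(\pm3,\pm1,0)$. A set of $10$ vectors cannot form a single orbit of a group of order $16$, and the final step of your plan (``exhibiting explicit elements of $O(\Lb_{20})$ that carry each candidate vector, or its opposite, to $(1,1,2)$'') is in fact impossible: the orthogonal complement of $(1,1,2)$ is $\ZM e\oplus\ZM f\simeq\langle 4\rangle\oplus\langle 4\rangle$ (discriminant $16$), whereas the orthogonal complement of $(1,3,0)$ has basis $(-1,1,1)$, $(2,0,1)$ and is isometric to $\langle 20\rangle\oplus\langle 20\rangle$ (discriminant $400$), so no isometry of $\Lb_{20}$ can match them. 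There are two orbits, of sizes $2$ and $8$, hence two inequivalent primitive embeddings of $\langle 40\rangle$, and the transitivity you assert is false. What rescues the argument in the application is the extra constraint of Lemma~\ref{lem:indice-2}: the geometrically relevant generator $L$ must satisfy $[\Lb_{20}:\ZM L\oplus L^{\perp}]=2$, which holds for $\pm(1,1,2)$ but fails for the orbit of $(1,3,0)$, where the index is $10$. So either impose that hypothesis explicitly (and then your enumeration does single out $\pm(1,1,2)$), or fall back on the citation to Kondo as the paper does; as written, the $\langle 40\rangle$ case of your proof cannot be completed.
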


\begin{proof}[Proof of Lemma~\ref{lem:plongements}]
The uniqueness of the embedding of $\langle 40 \rangle$ is shown 
in~ \cite[Lemma 3.1]{kondo}. For the two other cases, let $(e,f,h)$ 
denote the canonical basis of the lattice $\Lb_{20}$ and let $L$ 
be a primitive element of $\Lb_{20}$ such that $L^2=4$ (resp. $8$). 
Write $L=\l e + \mu f + \d h$ with $\l$, $\mu$, $\d \in \ZM$. Then 
$$L^2=(2\l-\d)^2+(2\mu-\d)^2 + 10 \d^2,$$
so $\d=0$ and $\l^2+\mu^2 =1$ (resp. $\l^2+\mu^2=2$). 
This gives $(\l,\m)=(\pm 1,0)$ or $(0,\pm 1)$ (resp. $(\pm 1,\pm 1)$). 
So $L=\pm e$ or $\pm f$ (resp. $L=\pm e \pm f$), and the four solutions  
are in the orbit of the group $\langle -\Id_{\Lb_{20}},\r_1 \rangle$ 
(resp. $\langle -\Id_{\Lb_{20}},\r_2 \rangle$).
\end{proof}
\end{quotation}

We choose an isomorphism between $\Lb_{20}$ and $\Lb_X^{M_{20}}$. 
Then the group $G/M_{20}=\langle \iota \rangle$ acts on $\Lb_{20}$ 
and $\iota$ acts by $-\Id$ on $\Tb_X$. Also, 
the lattice $\Lb_X^G$ has rank $1$ because $\Tb_X$ has rank $2$.

\begin{quotation}
\begin{lemma}\label{lem:indice-2}
The sublattice $\Lb_X^G \oplus \Tb_X$ has index $2$ in $\Lb_{20}$.
\end{lemma}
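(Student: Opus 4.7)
My plan is to identify $\Tb_X$ with an $\iota$-eigenspace inside $\Lb_{20}$, then exploit an averaging homomorphism to bound the index from above, and finally rule out the degenerate case via indecomposability of $\Lb_{20}$.

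First, I would show that $\Tb_X = \Lb_{20}^{-\iota}$, the $(-1)$-eigenspace of $\iota$ acting on $\Lb_{20}$. Since $\iota^2 \in M_{20}$ is symplectic it fixes the holomorphic $2$-form $\omega_X$, so the non-symplectic automorphism $\iota$ acts on $\omega_X$ by a scalar of order exactly $2$, namely $-1$. Because the Picard number of $X$ equals $20$, the transcendental lattice $\Tb_X$ has rank $2$ with $\Tb_X \otimes \CM = \CM\omega_X \oplus \CM\bar\omega_X$, so $\iota$ acts as $-\Id$ on all of $\Tb_X$. On the other hand, every element of $M_{20}$ fixes $\omega_X$ and $\bar\omega_X$, hence acts trivially on $\Tb_X$, so $\Tb_X \subseteq \Lb_X^{M_{20}} = \Lb_{20}$. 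Since $\Tb_X$ is primitive in $\Lb_X$ it is primitive in $\Lb_{20}$, and as $\Tb_X$ and $\Lb_{20}^{-\iota}$ are both primitive sublattices of $\Lb_{20}$ of rank $2$ with $\Tb_X \subseteq \Lb_{20}^{-\iota}$, they must coincide.

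Second, writing $\Nb = \Lb_X^G = \Lb_{20}^\iota$ (of rank $1$) and $\Tb = \Tb_X = \Lb_{20}^{-\iota}$, I would introduce the averaging homomorphism
\[
\phi \colon \Lb_{20} \longrightarrow \Nb/2\Nb, \qquad v \longmapsto (v + \iota v) + 2\Nb.
\]
A short computation shows that $\ker\phi = \Nb \oplus \Tb$: the inclusion $\supseteq$ is clear since $v = a+b$ with $a\in\Nb$, $b\in\Tb$ gives $v+\iota v = 2a \in 2\Nb$, while for the reverse, if $v + \iota v = 2w$ with $w \in \Nb$, then $v - w$ is $\iota$-antiinvariant and hence lies in $\Tb$, so $v = w + (v-w) \in \Nb \oplus \Tb$. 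Because $\Nb$ has rank $1$, the target $\Nb/2\Nb$ has order $2$, which yields $[\Lb_{20} : \Nb \oplus \Tb] \in \{1,2\}$.

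Finally, I would exclude index $1$: were $[\Lb_{20} : \Nb \oplus \Tb] = 1$, then $\Lb_{20}$ would split as an orthogonal direct sum $\Nb \oplus \Tb$ with both summands of positive rank, contradicting the indecomposability of $\Lb_{20}$ established in Remark~\ref{rem:indec}. Hence the index equals $2$, as claimed. The most delicate step is the first one, since the identification $\Tb_X = \Lb_{20}^{-\iota}$ depends crucially on the Picard number being exactly $20$, so that $\omega_X$ and $\bar\omega_X$ span all of $\Tb_X \otimes \CM$; once this identification is in place, the remainder of the argument is a short and essentially formal computation.
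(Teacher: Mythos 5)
Your proof is correct and follows essentially the same route as the paper: both arguments rest on the $\iota$-eigenspace decomposition of $\Lb_{20}$, bound the index by $2$ via the averaging identity $2L=(L+\iota L)+(L-\iota L)$, and rule out index $1$ using the indecomposability of $\Lb_{20}$ from Remark~\ref{rem:indec}. The only difference is cosmetic: where the paper invokes Nikulin to see that the quotient is cyclic and then kills it by $2$, you map $\Lb_{20}$ directly into $\Lb_X^G/2\Lb_X^G \cong \ZM/2\ZM$, which makes the upper bound self-contained.
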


\begin{proof}[Proof of Lemma~\ref{lem:indice-2}]
First, $\Lb_X^G \oplus \Tb_X$ is different from $\Lb_{20}$ 
since $\Lb_{20}$ is indecomposable 
(see Remark~\ref{rem:indec}). 
We have
$$\Lb_X^G=\{L \in \Lb_{20}~|~\iota(L)=L\},$$
$$\Tb_X=\{L \in \Lb_{20}~|~\iota(L)=-L\}.$$
By~\cite[Section 5]{nikulin}, the projection 
$\Lb_{20}/(\Lb_X^G  \oplus \Tb_X) \longto (\Lb_X^G)^\vee/\Lb_X^G$ 
is a $\iota$--invariant monomorphism. This shows in particular 
that $\Lb_{20}/(\Lb_X^G  \oplus \Tb_X)$ is cyclic. Also, if $L \in \Lb_{20}$, 
then
$$2L =\underbrace{L+\iota(L)}_{\in \Lb_X^G} + \underbrace{L-\iota(L)}_{\in \Tb_X} 
\in \Lb_X^G \oplus \Tb_X.$$
So the sublattice $\Lb_X^G \oplus \Tb_X$ has index $2$ in $\Lb_{20}$. 
This completes the proof of the Lemma.
\end{proof}
\end{quotation}

\medskip

We now come back to the proof of the theorem. We write $\Lb_{20}^G=\ZM L$. 
By the proof of Corollary~\ref{cor:parity}, we have $L^2=4n$ 
(so that $\Lb_{20}^G \simeq \langle 4n \rangle$) and 
the transcendental lattice of $X$ is of the form 
$$
\Tb_X=\left(\begin{array}{cc}
4a&2b\\
2b&4c\\
\end{array}
\right)
$$
with $a,b,c$ integers such that 
$d:=4ac-b^2>0$, $b^2\leq ac\leq \frac{d}{3}$, $-a\leq b\leq a\leq c$, 
see~e.g.~\cite[p. 128]{shiodainose}. We have shown in Lemma \ref{lem:indice-2} that $\Lb_{20}^G\oplus \Tb_X\simeq \langle 4n \rangle\oplus \Tb_X $ is a sublattice
of index $2$ in $\Lb_{20}$.  Hence we have by applying \cite[Section 2, Lemma 2.1]{BPV}
$$
4=[\Lb_{20}:\langle 4n \rangle\oplus \Tb_X]^2=\frac{\det (\langle 4n \rangle\oplus \Tb_X)}{\det \Lb_{20} }=\frac{16n(4ac-b^2)}{160}.
$$
In conclusion
$$
n(4ac-b^2)=2^3\cdot 5.
$$
We discuss two cases.

\medskip

\noindent{\bf Assume that $b$ is odd}. Then $4ac-b^2$ is also 
odd. This means that it is equal to $1$ or $5$, but then if $b=2k+1$ 
we get $4ac-4k^2-4k-1$ equal to $1$ or $5$ which is clearly impossible.

\medskip

\noindent{\bf Assume that $b$ is even}. Then with $b=2b'$ we get
$$
(ac-b'^2)n=2\cdot 5
$$
We distinguish four cases:
\begin{enumerate}
\item $n=1$, $ac-b'^2=10$,
\item $n=2$, $ac-b'^2=5$,
\item $n=5$, $ac-b'^2=2$,
\item $n=10$, $ac-b'^2=1$.
\end{enumerate} 
By Lemma~\ref{lem:plongements}, the lattices 
$\langle 4 \rangle$, $\langle 8 \rangle$ and $\langle 40 \rangle$ 
have a unique primitive embedding in the lattice $\Lb_{20}$: 
\begin{enumerate}
\item If $n=1$, we may assume that $L=e$. 
We now compute the orthogonal complement of $\IZ e$ in the lattice 
$\Lb_{20}$. This will give us the transcendental lattice. Let now 
$\lambda e+\mu f+\delta h$ with $\lambda,\mu,\delta\in\IZ$ be such that
$$
\langle \lambda e+\mu f+\delta h, e\rangle=0
$$
This gives $4\lambda- 2\delta=0$ so that the orthogonal complement is generated 
by the elements $e+2h$ and $f$ and considering instead the generators $e+f+2h$ and $f$ 
we get the lattice given in the theorem.

\item If $n=2$, we may assume that $L=e-f$. 
We compute the orthogonal complement of $e-f$ in $\Lb_{20}$ which is 
generated by $e+f$ and $-h$ which are the generators of the rank two lattice 
whose bilinear form is as given in the theorem. 

\itemth{4} If $n=10$, then the orthogonal complement of $L$ has been computed 
in~\cite{kondo} and one gets the rank two lattice 
whose bilinear form is given as in the theorem. 
\end{enumerate}
We have respectively $(a,b,c)=(1,0,1)$, $(a,b,c)=(1,0,10)$, 
$(a,b,c)=(2,2,3)$. 

\medskip

We consider now the third case with $ac-b'^2=2$ and we show 
that it is not possible. The integers $a, b, c$ satisfy $-a\leq b\leq a\leq c$, 
$ac\leq d/3$, $(b')^2\leq (ac)/4\leq d/3$.
By the previous computations, we have that 
$d=4(ac-b'^2)$ hence in this case $d=8$, we get that $b'^2\leq 2$. Hence $b'=0$
or $b'=1$. In the first case we get $a=1$, $c=2$ which gives the matrix 
$$
M:=\left(\begin{array}{cc}
4&0\\
0&8\\
\end{array}
\right).
$$
In the second case we get $a=1, c=3$ but then $ac=3>8/3$ so this is not possible. 
To make the case $\Tb_X=M$ possible, we should then find a primitive embedding in $\Lb_{20}$ with vectors $v_1$ and $v_2$ with $v_1^2=4$, $v_2^2=8$, $v_1\cdot v_2=0$ but by the computations in Lemma~\ref{lem:plongements} and with the same notations as there we see that we must send $v_1$ to $\pm e$ or $\pm f$ and $v_2$ to $\pm e\pm f$, 
so these
never satisfy the condition $v_1\cdot v_2=0$. 
% 
% The three K3 surfaces $X_{\kondo}$, $X_{\mukai}$ and $X_{\bra}$ 
% are described in the sections~\ref{sec:kondo}, \ref{sec:g29}, \ref{sec:q8}.
\end{proof}
%We recall here the classification theorem of S. Brandhort (see ??).
%\begin{theorem} \todo{I'm not sure what to do, maybe just talk about the three K3 without formulate a thm on they unicity ?
%We do not know now the statement in the theorem of Brandhorst....}
%Let $\G$ be a finite group acting on a K3 surface $X$, such that $\G$ contains
%the group $M_{20}$ which acts symplectically, then $[\G : M_{20}]\in \{2,4\}$. If $[\G : M_{20}]=2$ then 
%there are exactly two possibilities (up to isomorphism) of pairs $(X, \G)$. If $[\G : M_{20}]=4$ then
%there is only one pair $(X, \G)$. 

%The three K3 surfaces are isomorphic
%to the surfaces $X_\kondo$, $X_\mukai$, $X_\bra$. 
%\end{theorem}
%\medskip

\section{Kondo's example}\label{sec:kondo} 

\medskip

It was shown by Kondo in \cite[Theorem~1]{kondo} that the maximal  order of a finite group 
acting faithfully on a K3 surface is $3\,840$ and that this bound is reached for 
a unique K3 surface $X_{\kondo}$ and a unique faithful action of a unique finite group 
$G_\kondo$ of order $3\,840$. 
Kondo shows that $X_\kondo=\kummer(E_i\times E_i)$. 
Recall that we have an exact sequence
\equat\label{eq:kondo}
1\longto M_{20}\longto G_{\kondo}\longto \mub_4 \longto  1,
\endequat
where the last map is induced by the group homomorphism 
$$
\alpha: G_{\kondo} \longto  \IC^*,
$$
defined by $g(\omega_{X_\kondo})=\alpha(g)\omega_X$ and 
$\omega_{X_\kondo}$ is the holomorphic 
$2$-form that we have fixed on $X_\kondo$. Recall 
that $X_\kondo=\kummer(E_i\times E_i)$ (see e.g. \cite[Proof of Lemma 1.2]{kondo}) has transcendental lattice
$$
\Tb_{X_\kondo}=\begin{pmatrix}
4&0\\
0&4\\
\end{pmatrix}.
$$
With the previous notation we have:
\begin{pro}\label{quartic_kondo}
The invariant 
N\'eron--Severi group $NS(X_\kondo)^{M_{20}}=\mathbb{Z}\, L_{40}$ with $L_{40}^2=40$.
%Let $X$ be a K3 surface with a symplectic action by $M_{20}$. Then 
%the invariant lattice $\Lb_X^{M_{20}}$ is isometric to $\Lb_{20}$.
\end{pro}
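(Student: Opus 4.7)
The plan is to identify $\NS(X_\kondo)^{M_{20}}$ with the orthogonal complement of the transcendental lattice $\Tb_{X_\kondo}$ inside the invariant lattice $\Lb_{X_\kondo}^{M_{20}}$, and then to exhibit an explicit primitive generator with square~$40$.

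First, I would invoke Proposition~\ref{prop:l20} to fix an isometry $\Lb_{X_\kondo}^{M_{20}} \simeq \Lb_{20}$, so that the invariant lattice has rank~$3$. Since $M_{20}$ acts symplectically, each element of $M_{20}$ fixes the holomorphic $2$-form, and the standard argument (using that the Hodge-theoretic eigenspaces for a finite order automorphism are preserved) then shows it acts trivially on the entire transcendental lattice $\Tb_{X_\kondo}$. Thus $\Tb_{X_\kondo}$ is a primitive rank-$2$ sublattice of $\Lb_{X_\kondo}^{M_{20}}$ (the Picard number of $X_\kondo$ being~$20$). Since $\NS(X_\kondo)$ and $\Tb_{X_\kondo}$ are mutually orthogonal in $\Lb_{X_\kondo}$, it follows that
\[
\NS(X_\kondo)^{M_{20}} \;=\; \NS(X_\kondo) \cap \Lb_{X_\kondo}^{M_{20}} \;=\; \Tb_{X_\kondo}^{\perp} \cap \Lb_{X_\kondo}^{M_{20}},
\]
a rank-$1$ lattice.

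Next I would locate the embedding $\Tb_{X_\kondo} \simeq \langle 4 \rangle \oplus \langle 4 \rangle \hookrightarrow \Lb_{20}$ explicitly. By the argument in the proof of Lemma~\ref{lem:plongements}, the only primitive vectors of square~$4$ in $\Lb_{20}$ are $\pm e$ and $\pm f$ in the canonical basis $(e,f,h)$; since the two orthogonal summands of $\Tb_{X_\kondo}$ must be identified with two orthogonal such vectors, the embedding sends $\Tb_{X_\kondo}$ to $\langle e, f\rangle$, uniquely up to the isometry group of $\Lb_{20}$ described in Remark~\ref{rem:l20}.

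Finally, I would compute the orthogonal complement of $\langle e,f\rangle$ in $\Lb_{20}$. Writing $L = \lambda e + \mu f + \delta h$, the conditions $L \cdot e = L \cdot f = 0$ read off the Gram matrix as $4\lambda - 2\delta = 4\mu - 2\delta = 0$, so $\delta = 2\lambda = 2\mu$, and the primitive integral solution is $L_{40} = e + f + 2h$. A short Gram-matrix computation yields $L_{40}^2 = 40$, completing the proof. The only delicate point is the uniqueness of the primitive embedding $\Tb_{X_\kondo} \hookrightarrow \Lb_{20}$; once this has been reduced to Lemma~\ref{lem:plongements}, the remainder of the argument is elementary lattice arithmetic.
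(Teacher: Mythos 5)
Your argument is correct, and it is worth noting that it is genuinely self-contained where the paper is not: the paper's entire proof of this proposition is a citation of Kond\=o's Lemma~3.1. Your route also runs in the opposite logical direction from the machinery the paper builds around Lemma~\ref{lem:plongements} and Theorem~\ref{involutions}: there one starts from the polarization class, embeds $\langle 4n\rangle$ primitively into $\Lb_{20}$, and computes its orthogonal complement to \emph{deduce} $\Tb_X$; you instead take as input the known transcendental lattice $\Tb_{X_\kondo}\simeq\langle 4\rangle\oplus\langle 4\rangle$ of $\kummer(E_i\times E_i)$ (recorded in the paper just before the proposition), pin down its unique primitive embedding as $\ZM e\oplus\ZM f$, and read off the rank-one complement $\ZM(e+f+2h)$ of square $40$. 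Both directions are legitimate; yours has the small advantage of making the proposition independent of Kond\=o's lemma, at the cost of relying on the identification $X_\kondo\cong\kummer(E_i\times E_i)$ and its transcendental lattice as given data. Two points you rightly treat but should keep explicit: (i) the orthogonal basis vectors of $\Tb_{X_\kondo}$ are primitive in $\Tb_{X_\kondo}$, and since $\Tb_{X_\kondo}$ sits primitively in $\Lb_{X_\kondo}^{M_{20}}$ (a fixed lattice is always primitive, and $\Tb_X$ is primitive in $\Lb_X$), they are primitive in $\Lb_{20}$ --- in fact any vector of square $4$ there is automatically primitive since the form $(2\lambda-\delta)^2+(2\mu-\delta)^2+10\delta^2$ has minimum $4$; and (ii) orthogonality of the two square-$4$ vectors forces one to be $\pm e$ and the other $\pm f$, so the image is exactly $\ZM e\oplus\ZM f$ and not some other rank-two sublattice. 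With those in place the Gram computation $L_{40}=e+f+2h$, $L_{40}^2=40$ is exactly right.
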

\begin{proof}
See \cite[Lemma 3.1]{kondo}.
\end{proof}

\begin{remark}
In particular this means that we cannot represent $X_\kondo$ as a quartic 
surface in $\mathbb{P}^3(\mathbb{C})$ with a faithful action
of $M_{20}$ by linear transformations of $\mathbb{P}^3(\mathbb{C})$.\finl
\end{remark}

% Assume the contrary  that $Km(E_i\times E_i)\cong X_\mukai$. Then one can choose a polarization of degree $4$, i.e. an ample divisor $L_4$ on 
% $Km(E_i\times E_i)$ such that $L_4^2=4$ which gives an embedding of $Km(E_i\times E_i)$ in 
% $\IP^3$ as the smooth quartic $X_\mukai$ and $L_4$ is invariant by 
% $M_{20}$. Recall that $Km(E_i\times E_i)$ has transcendental lattice
% $$
% T=\left(\begin{array}{cc}
% 4&0\\
% 0&4\\
% \end{array}
% \right).
% $$
% Kondo in \cite{kondo} showed that the invariant ample cone for the action of $M_{20}$ is one dimensional generated by an ample class of square $40$. In fact the N\'eron-Severi group contains the orthogonal complement to the invariant lattice in the K3 lattice for the action of $M_{20}$ which has rank 19, this by a result of Nikulin is negative definite (and it contains no element of square $-2$) so that since our K3 surface is projective, the N\'eron-Severi group contains an ample invariant polarization, which spans then the invariant ample cone and this is 1--dimensional. In \cite{kondo}, Kondo computed that the invariant  lattice for the action of $M_{20}$ is isometric to
% $$
% L^{M_{20}}:=\left(\begin{array}{ccc}
% 4&0&-2\\
% 0&4&-2\\
% -2&-2&12
% \end{array}
% \right)
% $$
% and that the orthogonal complement of $T$ is then the lattice $\langle 40 \rangle$, so there is no invariant polarization of degree $4$.

\subsection{A geometric model}
By using a result of Inose \cite[Theorem 2]{inose} one can view 
$X_\kondo=\kummer(E_i\times E_i)$ as the minimal resolution of a 
singular surface in $\IP(1,1,2,2,2)$. We give here the equation.   
Inose shows that $X_\kondo$ is the minimal resolution of the quotient of the Fermat 
quartic surface
$$
F:\,\,x^4+y^4+z^4+t^4=0
$$
by the symplectic involution $\iota: (x:y:z:t)\mapsto (x:y:-z:-t)$, which 
has 8 isolated fixed points \cite[Section 5]{nikulin1}. Since the automorphism 
is symplectic, the minimal resolution of the quotient 
$X_\kondo \to F/\langle \iota\rangle$ is again a K3 surface
and the Picard number remains unchanged. Moreover, for the 
transcendental lattices $\Tb_{X_\kondo}(2)=\Tb_F$ holds.  
The ring
of invariant polynomials for the action of $\iota$ is 
generated by $x,y,z^2,t^2,zt$. We put $z_0=x, z_1=y,
 z_2=z^2, z_3=t^2, z_4=zt$ and we have then the 
 equations for $F/\langle \iota \rangle$ in $\IP(1,1,2,2,2)$:
$$
z_0^4+z_1^4+z_2^2+z_3^2=0,\,\, z_4^2=z_2z_3.
$$
The eight $A_1$ singularities are determined as follows. First 
we have singularities coming from the ambient space, these are the intersection with
the plane $z_0=z_1=0$. This gives $z_2^2+z_3^2=0$ which together with $z_4^2=z_2z_3$ gives four $A_1$ singularities. The others come from the singularities of the cone  $z_4^2=z_2z_3$, i.e. with $z_4=z_2=z_3=0$ we get the four singularities $A_1$ with equation $z_0^4+z_1^4=0$.

See also~\cite{BH} for an embedding of $X_\kondo$ in $\PM^{21}(\CM)$.

\bigskip

\section{Mukai's example}\label{sec:g29}

\medskip

Let $G_\mukai=\langle s_1,s_2,s_3,s_4\rangle$, where 
$$
\begin{array}{cc}
s_1=\begin{pmatrix}
1  &0 & 0 & 0\\
0  &1 & 0 & 0\\
0 & 0 & 1 & 0\\
0 & 0 & 0 &-1\\
\end{pmatrix}, &
s_2=\DS{\frac{1}{2}}\begin{pmatrix}
 1&  1&  i&  i\\
 1 & 1& -i& -i\\
-i&  i&  1& -1\\
-i  &i &-1&  1\\
\end{pmatrix},\\
&\\
s_3=\begin{pmatrix}
 0 &1& 0& 0\\
 1 &0 &0 &0\\
 0 &0 &1& 0\\
 0& 0 &0& 1\\
\end{pmatrix},&
s_4=\begin{pmatrix}
 1 &0& 0& 0\\
 0 &0 &1 &0\\
 0& 1 &0 &0\\
 0 &0 &0 &1\\
\end{pmatrix}. \\
\end{array}
$$
Then $G_\mukai$ is the primitive 
complex reflection group denoted by $G_{29}$ in Shephard-Todd 
classification~\cite{shephardtodd}. Recall that $|G_\mukai|=7~\!680$. 
We denote by $V$ the vector space $\CM^4$, and by $\CM[V]$ 
the algebra of polynomial functions on $V$, 
identified naturally with $\CM[x,y,z,t]$. 
If $m$ is a monomial in $x$, $y$, $z$ and $t$, 
we denote by $\Sigma(m)$ the sum of all monomials 
obtained by permutation of the variables. For instance, 
$$\Sigma(x)=x+y+z+t,\qquad \Sigma(xyzt)=xyzt,$$
$$\Sigma(x^4y)=x^4(y+z+t)+y^4(x+z+t)+z^4(x+y+t)+t^4(x+y+z)=
\Sigma(xy^4).$$

\medskip

Note that the derived subgroup $G_\mukai'$ of $G_\mukai$ 
has index $2$, that 
$G_\mukai'=G_\mukai \cap \Sb\Lb_4(\CM)$, 
so that $G_\mukai= G_\mukai'\langle s_1 \rangle$. 
Note also that $\Zrm(G_\mukai) \simeq \mub_4 \subset G_\mukai'$. 
Moreover, $PG_\mukai' \simeq M_{20}$ 
so that we have a split exact sequence 
\equat\label{eq:pg29}
1 \longto PG_\mukai' \simeq M_{20} \longto PG_\mukai \longto \mub_2 \longto 1,
\endequat
where the last map is the determinant. 

Now, there exists a unique (up to scalar) 
homogeneous invariant $f$ of $G_\mukai$ of degree $4$: it is given by  
$$f=\Sigma(x^4)-6\Sigma(x^2y^2).$$
We set $X_\mukai=\ZC(f)$. 
It can easily be checked that $X_\mukai$ is a 
smooth and irreducible quartic in $\PM^3(\CM)$, so that it is a K3 surface, 
endowed with a faithful symplectic action of $M_{20}$ and an extra non-symplectic 
automorphism of order $2$, i.e. one can fix it as 
$[x:y:z:t]\mapsto [x:y:z:-t]$, the one induced by $s_1$. 

In \cite[nr. 4 on p. 190]{mukai} Mukai gives the following equation for some $M_{20}$-invariant quartic polynomial
$$
\Sigma(x^4)+12xyzt,
$$
and we denote by $X_\mukai'$ the zero set of this polynomial which defines a smooth
quartic K3 surface. We have

\begin{pro}
There exists $g \in \Gb\Lb_4(\CM)$ such that $g(X_\mukai) = X_\mukai'$.
\end{pro}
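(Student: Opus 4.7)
The plan is to exploit two rigidity statements: the uniqueness of the $G_\mukai$-invariant quartic in $\CM[V]_4$, and the uniqueness (up to $\Gb\Lb_4(\CM)$-conjugacy) of the faithful $4$-dimensional complex representation of the abstract group $G_{29}$. Combined, they should force the two quartics $f$ and $\Sigma(x^4)+12xyzt$ to be projectively equivalent, which is exactly the assertion $g(X_\mukai)=X_\mukai'$ for some $g\in\Gb\Lb_4(\CM)$.

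First I would recall that $\dim \CM[V]_4^{G_\mukai}=1$. Indeed, $G_\mukai=G_{29}$ is a primitive complex reflection group whose invariant ring is polynomial with generators in degrees $4$, $8$, $12$, $20$ (Shephard--Todd), so $f$ is the unique homogeneous invariant of degree $4$ up to a scalar.

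Next, I would use Mukai's construction in \cite[Nr.~4, p.~190]{mukai} to exhibit a subgroup $G'\leq\Gb\Lb_4(\CM)$, abstractly isomorphic to $G_\mukai$, acting on $V$ in such a way that $\Sigma(x^4)+12xyzt$ is $G'$-invariant; explicit generators can be obtained by combining the coordinate permutations from $\Sb_4$, appropriate sign-flips, and a non-symplectic involution analogous to $s_1$. One then checks on the character table of $G_{29}$ that this group has a unique faithful irreducible $4$-dimensional complex representation, namely its reflection representation, so the given embedding $G_\mukai\hookrightarrow\Gb\Lb_4(\CM)$ and the embedding $G'\hookrightarrow\Gb\Lb_4(\CM)$ are conjugate: there exists $g\in\Gb\Lb_4(\CM)$ with $gG'g^{-1}=G_\mukai$.

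Finally, the polynomial $g\cdot f$ defined by $(g\cdot f)(v)=f(g^{-1}v)$ is $G'$-invariant and homogeneous of degree $4$. Applying the first step to $G'$ in place of $G_\mukai$ (both being $\Gb\Lb_4(\CM)$-conjugate copies of the same reflection group) yields $\dim\CM[V]_4^{G'}=1$, so $g\cdot f$ must be a non-zero scalar multiple of $\Sigma(x^4)+12xyzt$, and therefore $g(X_\mukai)=X_\mukai'$. The main obstacle lies in the second step: one must make Mukai's group $G'$ truly explicit and verify that no non-trivial outer twist of the $4$-dimensional representation of $G_{29}$ intervenes. A fully computational alternative is to guess $g$ as a suitably normalised Hadamard-type matrix (in the spirit of $s_2$) and to check the identity $g\cdot f\propto \Sigma(x^4)+12xyzt$ by direct symbolic expansion, a task well suited to {\sc Magma}.
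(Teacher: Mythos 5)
Your overall strategy --- reduce everything to the one-dimensionality of $\CM[V]_4^{G_\mukai}$ and to the conjugacy, inside $\Gb\Lb_4(\CM)$, of the two copies of $G_{29}$ stabilising the two quartics --- is reasonable in outline, but both pillars you lean on are unproved, and one of them is false as stated. First, $G_{29}$ does \emph{not} have a unique faithful irreducible $4$-dimensional representation: its centre is $\mub_4$, generated by $i\,\Id_V$, which has trace $4i$ on the reflection representation $V$ and trace $-4i$ on the dual $V^*$; hence $V\not\simeq V^*$, and both are faithful, irreducible and $4$-dimensional (the twists by the determinant character give further candidates). What your argument actually needs is conjugacy of the \emph{image subgroups} of $\Gb\Lb_4(\CM)$ --- a weaker statement, but one that still requires an argument (for instance that $G_\mukai$ is stable under complex conjugation of matrix entries, so that the dual and conjugate representations have the same image) --- and none of this is addressed. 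Second, the group you propose to generate from ``coordinate permutations, sign-flips, and a non-symplectic involution analogous to $s_1$'' is a monomial, hence imprimitive, group of order at most $2^4\cdot 4!=384$, so it cannot be isomorphic to the primitive group $G_{29}$ of order $7\,680$; the stabiliser of Mukai's quartic necessarily involves a Hadamard-type generator, and identifying the resulting group with $G_{29}$ (rather than merely with some group of the right order) is itself a nontrivial verification that you have not carried out.

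The ``fully computational alternative'' you relegate to your last sentence is in fact the entire proof, and it is far shorter than the route you sketch: one exhibits $g$ explicitly as the substitution $x\mapsto x-y$, $y\mapsto x+y$, $z\mapsto z-t$, $t\mapsto z+t$ followed by $x\mapsto ix$, $t\mapsto it$, and checks by direct expansion that $\Sigma(x^4)+12xyzt$ is carried to $2f$. I would encourage you either to carry out that two-line computation, or, if you wish to keep the representation-theoretic route, to supply the missing subgroup-conjugacy statement together with an honest construction of the linear group stabilising Mukai's quartic.
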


\begin{proof}
If one applies to the Mukai's polynomial the change of coordinates:
$$
x\mapsto x-y,\,y\mapsto x+y,\, z\mapsto z-t,\, t\mapsto z+t 
$$
one gets 
$$
2\Sigma(x^4)+12x^2y^2+12z^2t^2+12x^2z^2-12x^2t^2-12y^2z^2+12y^2t^2
$$
and by replacing by
$$
x\mapsto ix,\, t\mapsto it,\, y\mapsto y,\,  
$$
and dividing by $2$ one finds the polynomial $f$. 
\end{proof}

%As explained in~\cite{BS2}, $X_\mukai$ is the quartic defined by Mukai 
%in~\cite{mukai}: Mukai gives the following equation for some 
%$M_{20}$-invariant quartic polynomial
%byt an easy linear change of coordinates show that it defines 
%the same K3 surface as $X_\mukai$.

\medskip

Note the 
following fact:
\equat\label{eq:aut q4}
\text{\it If $g\in \Pb\Gb\Lb_4(\IC)$ leaves invariant $X_\mukai$ then  
$g\in PG_\mukai$.}
\endequat
\begin{proof}
If $g \in \Pb\Gb\Lb_4(\CM)$ leaves $X_\mukai$ invariant, we 
may find a representative $\gti$ of $g$ in $\Gb\Lb_4(\CM)$ which leaves $f$ invariant. 
Let $\G=\{\g \in \Gb\Lb_4(\CM)~|~\lexp{\g}{f}=f\}$. 
We only need to prove that $\G=G_\mukai$. 
By~\cite{mm} or~\cite[Theorem~2.1]{orlik-solomon}, 
$\G$ is finite (because $X_\mukai$ is smooth), 
and contains $G_\mukai$. Let $R$ denote the set of reflections in 
$G_\mukai$ (and recall that $G_\mukai=\langle R \rangle$) 
and let 
$$\RC=\{\g s \g^{-1}~|~\g \in \G~\text{and}~s \in R\},$$
so that $\RC$ is a set of reflections contained in $\G$. 
We set $\G_\RC = \langle \RC \rangle$. Then $\G_\RC$ is a complex 
reflection group containing $G_\mukai$, but it follows from the 
classification of primitive complex reflection groups that 
$\G_\RC=G_\mukai$ or (up to conjugacy) the group denoted by $G_{31}$ 
in Shephard-Todd classification~\cite{shephardtodd}. Since $G_{31}$ 
has no non-zero invariant of degree $4$, this forces 
$\G_\RC=G_\mukai$. In particular, $G_\mukai$ is normal in $\G$, 
and so the result follows from~\cite[Proposition~3.13]{BMM} 
(which says that $\Nrm_{\Gb\Lb_4(\CM)}(G_\mukai)=G_\mukai\cdot \CM^\times$). 
\end{proof}

\medskip

The embedding $X_\mukai \injto \PM^3(\CM)$ defines the class of a hyperplane section 
on $X_\mukai$ that we denote by $L_4$: then $L_4^2=4$ and $L_4$ is $PG_\mukai$-invariant.

\medskip

\begin{pro}\label{degree4}
With the above notation, we have:
\begin{enumerate}
\itemth{1} The transcendental lattice of $X_\mukai$ is a rank 
two lattice given by 
$$
\Tb_{X_\mukai}:=\left(\begin{array}{cc}
4&0\\
0&40\\
\end{array}
\right)
$$ 
and $NS(X_\mukai)^{M_{20}}=\mathbb{Z}\, L_4$ with $L_4^2=4$.

\itemth{2} The quartic  $X_\mukai$ is the unique invariant quartic 
for a faithful action of $M_{20}$ on $\IP^3$. 
\end{enumerate}
\end{pro}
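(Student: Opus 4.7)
For part (1), the plan is to apply Theorem~\ref{involutions} directly, with $\iota$ taken to be (the image in $\Aut(X_\mukai)$ of) the involution $s_1=\diag(1,1,1,-1) \in G_\mukai$. The hypotheses are easy to verify: since $G_\mukai'$ is the derived subgroup of $G_\mukai$, it is normalized by $s_1$, so $PG_\mukai' \simeq M_{20}$ is normalized by the image of $s_1$; the element $s_1^2 = \Id$ lies in $M_{20}$; and $\det s_1=-1$ shows that $s_1$ acts by $-1$ on $\omega_{X_\mukai}$, i.e.\ non-symplectically. With $G=M_{20}\langle\iota\rangle$, the hyperplane class $L_4$ lies in $NS(X_\mukai)^G$ with $L_4^2=4$. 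By Theorem~\ref{involutions}, if $L$ is a primitive generator of $NS^G$, then $L^2 \in \{4, 8, 40\}$; writing $L_4=mL$ and comparing squares, the equation $4=m^2L^2$ forces $m=1$ and $L^2=4$. This places us in case (2), giving $\Tb_{X_\mukai}\simeq\diag(4,40)$. Finally, since $s_1$ fixes $L_4$, the rank-one lattices $NS(X_\mukai)^G$ and $NS(X_\mukai)^{M_{20}}$ coincide, and equal $\mathbb{Z}L_4$.

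For part (2), my plan is to show that any smooth quartic $X\subset\mathbb{P}^3$ admitting a faithful linear $M_{20}$-action is projectively equivalent to $X_\mukai$. The hyperplane class $L$ of $X$ lies in $NS(X)^{M_{20}}$ with $L^2=4$, and by Proposition~\ref{prop:l20} and Lemma~\ref{lem:plongements} it is a primitive vector of square $4$ in $\Lb_X^{M_{20}}\simeq\Lb_{20}$. The same case analysis as in part (1)---once applied to $X$---then pins down $\Tb_X\simeq\diag(4,40)$. By the Shioda-Mitani classification of Kummer surfaces of product type (or equivalently, by Torelli applied via the unique Hodge structure on this transcendental lattice), $X$ is isomorphic to $X_\mukai$ as a K3 surface. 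Since $L$ and $L_4$ are the unique primitive ample classes in the $M_{20}$-invariant N\'eron--Severi groups (with square $4$), this isomorphism intertwines the two polarizations, and the resulting map between complete linear systems $|L|$ and $|L_4|$ produces an element of $\Pb\Gb\Lb_4(\CM)$ sending $X$ to $X_\mukai$.

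The main obstacle is that invoking Theorem~\ref{involutions} for an arbitrary $M_{20}$-invariant quartic $X$ requires exhibiting a non-symplectic involution of $X$ that normalizes the $M_{20}$-action and squares into $M_{20}$; for $X_\mukai$, this is supplied tautologically by $s_1$, but not obviously so in general. Three ways to overcome this come to mind. First, one may argue that, up to $\Pb\Gb\Lb_4(\CM)$-conjugation, any faithful action of $M_{20}$ on $\mathbb{P}^3$ factors through $PG_\mukai'$, so its full normalizer in $\Pb\Gb\Lb_4(\CM)$ is $PG_\mukai$---whose extra involution $s_1$ then induces an involution of $X$ exactly as required (invariance of $X$ under $s_1$ would follow from the uniqueness part below). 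Second, Corollary~\ref{cor:parity} realizes $X$ as a Kummer surface of a product of elliptic curves, whose Shioda--Inose structure directly supplies non-symplectic involutions. Third, one can bypass Theorem~\ref{involutions} for part (2) altogether: using that $G_\mukai$ is the Shephard--Todd reflection group $G_{29}$ with fundamental invariants of degrees $4,8,12,20$ and basic anti-invariant (the Jacobian) of degree $40$, a Molien-series computation shows that $(\CM[V]_4)^{G_\mukai'}=\CM\cdot f$, and combined with uniqueness of the faithful $4$-dimensional projective representation of $M_{20}$, this directly yields the uniqueness of $X_\mukai$ as an $M_{20}$-invariant quartic.
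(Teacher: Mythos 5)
Your argument is correct and follows essentially the same route as the paper: part (1) is precisely the intended appeal to Theorem~\ref{involutions} with $\iota$ induced by $s_1$ (which does act on $\omega_{X_\mukai}$ by $\det(s_1)^{\pm 1}=-1$, hence non-symplectically), and ruling out the cases $\langle 8\rangle$ and $\langle 40\rangle$ via $L_4^2=4$ is exactly how one lands in case (2). The one substantive comment concerns the ``main obstacle'' you raise in part (2): it is not actually there. To identify $\Tb_X$ for an arbitrary $M_{20}$-invariant smooth quartic $X$ you do not need to re-run Theorem~\ref{involutions}, hence you do not need to manufacture a non-symplectic involution normalizing the action. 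The hyperplane class $L$ is a primitive vector of square $4$ in $\Lb_X^{M_{20}}\simeq\Lb_{20}$ (primitivity follows from $4=m^2(L')^2$ with $(L')^2\in 4\ZM_{>0}$), it therefore spans $NS(X)^{M_{20}}$, and $\Tb_X$, being a primitive rank-two sublattice of $\Lb_{20}$ orthogonal to $L$, must coincide with $L^{\perp}$. The computation $L^\perp\simeq\diag(4,40)$, carried out inside the proof of Theorem~\ref{involutions} in the case $n=1$, is purely lattice-theoretic and uses only Lemma~\ref{lem:plongements}, not the involution hypothesis; this is exactly the paper's argument for (2), and it makes your three workarounds unnecessary (each of which would itself need further justification, e.g.\ the uniqueness of the faithful projective four-dimensional representation of $M_{20}$ in the third). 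Your closing remark on matching the two degree-$4$ polarizations identifies a step that the paper treats with the same brevity; it is the right point to be careful about, since an abstract isomorphism $X\simeq X_\mukai$ must still be arranged to carry $L$ to $L_4$ before one can conclude projective equivalence.
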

\begin{proof}
(1) has been proved in Theorem~\ref{involutions}, see also \cite[Section 3]{dolga_quartic}. 

\medskip

(2) Let $Q \in \PM^3(\CM)$ be a quartic leaved invariant by a faithful  
action of $M_{20}$. This means that there exists a representation 
of $M_{20}$ as a subgroup of 
$\Pb\Gb\Lb_4(\CM)$ which stabilizes $Q$. Then $Q$ is polarized 
by the lattice $\langle 4 \rangle $, so that we have an embedding of  
$\langle 4 \rangle $ in the lattice $\Lb_{Q}^{M_{20}}$. 
Since this embedding is unique by~(1), its orthogonal 
complement $\Tb_Q$ in $\Lb_{Q}^{M_{20}}$ is isometric to 
$\Tb_{X_\mukai}$. So $Q$ is projectively equivalent to $X_\mukai$.
\end{proof}

\medskip

\begin{pro}
The quartic $X_\mukai$ is the Kummer surface 
$\kummer(E_{i\sqrt{10}}\times E_{i\sqrt{10}})$.
\end{pro}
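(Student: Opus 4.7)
The plan is to read off the Kummer description directly from the transcendental lattice computed in Proposition~\ref{degree4}(1), by invoking the construction already used in the proof of Corollary~\ref{cor:parity}. Since the hard lattice-theoretic work is bundled into that corollary, the argument will reduce to a substitution into the Shioda-Mitani formulas.

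First, I will match
$$
\Tb_{X_\mukai} = \begin{pmatrix} 4 & 0 \\ 0 & 40 \end{pmatrix}
$$
against the general form $\begin{pmatrix} 4a & 2b \\ 2b & 4c \end{pmatrix}$ appearing in the proof of Corollary~\ref{cor:parity}. This forces $a=1$, $b=0$, $c=10$, and hence $\Delta = b^2 - 4ac = -40$. Then I will plug these values into the Shioda-Mitani formulas
$$
\tau_1 = \frac{-b+\sqrt{\Delta}}{2a}, \qquad \tau_2 = \frac{b+\sqrt{\Delta}}{2}
$$
to obtain $\tau_1 = \tau_2 = i\sqrt{10}$. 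Setting $A = E_{\tau_1} \times E_{\tau_2}$, the equality $\Tb_{\kummer(A)} = \Tb_A(2)$ recalled in the proof of Corollary~\ref{cor:parity} gives $\Tb_{\kummer(A)} = \Tb_{X_\mukai}$, so $X_\mukai \cong \kummer(E_{i\sqrt{10}} \times E_{i\sqrt{10}})$. Uniqueness of the abelian surface $A$ is ensured by~\cite[Theorem~5.1]{shiodamitani}, which is the source already cited in Corollary~\ref{cor:parity}.

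There is no real obstacle to overcome in this final step: the substantive work—namely the computation of $\Tb_{X_\mukai}$ in Proposition~\ref{degree4}(1), the primitive embedding analysis of Lemma~\ref{lem:plongements}, and the general Shioda-Mitani identification $\Tb_X = \Tb_A(2) = \Tb_{\kummer(A)}$—has already been carried out upstream. The proof is therefore essentially a one-line verification.
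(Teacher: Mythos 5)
Your proposal is correct and is exactly the argument the paper intends: the paper's proof simply says ``This follows from Corollary~\ref{cor:parity} and its proof,'' and you have filled in the same substitution $a=1$, $b=0$, $c=10$, $\Delta=-40$ into the Shioda--Mitani formulas to get $\tau_1=\tau_2=i\sqrt{10}$. Nothing further is needed.
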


\medskip

\begin{proof}
This follows from Corollary~\ref{cor:parity} and its proof.
\end{proof}

\medskip
% 
% \begin{remark}
% Moreover by \cite[Theorem 5.1]{shiodamitani} if $A'$ is another abelian surface 
% such that $\kummer(A')=\kummer(E_{i\sqrt{10}}\times E_{i\sqrt{10}})$ then 
% $A'$ is isomorphic to 
% $E_{i\sqrt{10}}\times E_{i\sqrt{10}}$.\finl \todo{Est-ce qu'on ne pourrait pas mettre l'unicite une fois pour toutes dans le corollaire~\ref{cor:parity} ?}
% \end{remark}
% 
% \medskip

\begin{remark}\label{rem:coniques dans q4}
As $X_\mukai$ is a Kummer surface, it admits $16$ two by two 
disjoint smooth rational curves (a {\it Nikulin configuration}). 
We were not able to find such a set of smooth rational curves, 
but, using {\sc Magma}, we have at least found $320$ conics in $X_\mukai$ 
(from which it is impossible 
to extract a Nikulin configuration: we can only extract 
$12$ two by two disjoint conics). Let
$$C_+=\{[x:y:z:t] \in \PM^3(\CM)~|~x+y+z=y^2 + yz + z^2 + 
\frac{3+\sqrt{10}}{2} t^2=0\}$$
$$C_-=\{[x:y:z:t] \in \PM^3(\CM)~|~x+y+z=y^2 + yz + z^2 + 
\frac{3-\sqrt{10}}{2} t^2=0\}.
\leqno{\text{and}}$$
Then $C_+$ and $C_-$ are two smooth conics contained in 
$X_\mukai$ and, if we denote 
by $\O_\pm$ the $G_\mukai$-orbit of $C_\pm$, then $\O_+ \neq \O_-$, 
$|\O_\pm|=160$, and all elements of $\O_\pm$ are 
contained in $X_\mukai$.\finl
\end{remark}

\medskip

\begin{remark}
Observe that $PG_\mukai$ is a maximal finite subgroup of $\Aut(X_\mukai)$. 
Indeed, if $PG_\mukai \varsubsetneq \G \subset \Aut(X_\mukai)$ with $\G$ finite, 
then $|\G| \ge 2\cdot |PG_\mukai| = 3~\!840$ and so by the result of Kondo in~\cite{kondo} the group $\G$ would be the group 
$G_\kondo$ defined in section~\ref{sec:kondo} 
and $X_\mukai$ would be isomorphic to $X_\kondo$: this is not the 
case by Proposition~\ref{quartic_kondo} and Proposition~\ref{degree4}.\finl 
\end{remark}

\medskip

\section{Brandhorst-Hashimoto's example}\label{sec:q8}

\medskip

Let $G_\bra$ be the subgroup of $\Gb\Lb_6(\CM)$ generated by 
$$t=\diag(-1,1,1,1,1,1),$$
$$u=
\begin{pmatrix}
i  &  0  &  0  &  0  &  0  &  0 \\
0  &  0  &  1  &  0  &  0  &  0 \\
0  &  1  &  0  &  0  &  0  &  0 \\
0  &  0  &  0  & -i  &  0  &  0 \\
0  &  0  &  0  &  0  &  0  &  1 \\
0  &  0  &  0  &  0  &  1  &  0 \\
\end{pmatrix}\quad\text{and}\quad v=
\begin{pmatrix}
0 & 0 & 0 & 0 & 0 & 1 \\
1 & 0 & 0 & 0 & 0 & 0 \\
0 & 1 & 0 & 0 & 0 & 0 \\
0 & 0 & 1 & 0 & 0 & 0 \\
0 & 0 & 0 & 0 & 1 & 0 \\
0 & 0 & 0 & 1 & 0 & 0 \\
\end{pmatrix}.
$$
All the numerical facts about $G_\bra$ stated below can be checked with 
{\sc Magma}. 
Then $|G_\bra|=3~\!840$, $\Zrm(G)=\mub_2$, $|G_\bra/G_\bra'|=2$ 
and there are two exact sequences 
$$1 \longto \mub_2 \longto G_\bra' \longto M_{20} \longto 1$$
and 
\equat\label{eq:brandhorst}
1 \longto M_{20}=PG_\bra' \longto PG_\bra \longto \mub_2 \longto 1.
\endequat
The second exact sequence splits (for instance by sending the non-trivial 
element of $\mub_2$ to $t$) and $G_\bra'=G_\bra \cap \Sb\Lb_6(\CM)$. 
Even though the last exact sequence looks like~(\ref{eq:pg29}), 
\equat\label{eq:non-iso}
\text{\it The groups $PG_{\mukai}$ and $PG_\bra$ are not isomorphic.}
\endequat
Note that 
the group $G_\bra'$ is isomorphic to the group denoted by $2_3.M_{20}$ 
in the {\sc Atlas} of finite 
groups\footnote{http://brauer.maths.qmul.ac.uk/Atlas/v3/group/M20/}. 
We denote by $V = \CM^6$ the natural representation of $G_\bra$ 
and we identify $\CM[V]$ with $\CM[x_1,x_2,x_3,x_4,x_5,x_6]$. 
Note that 
\equat\label{eq:double-transitif}
\text{\it $G_\bra$ acts doubly transitively on the set of hyperplanes  
$\{H_1,\dots,H_6\}$,}
\endequat
where $H_i$ is defined by $x_i = 0$. 

S. Brandhorst and K. Hashimoto~\cite{BH} proved that there is a unique K3 surface 
admitting a faithful action of $PG_\bra$ and, in a private 
communication, they asked the question about the equations of this 
K3 surface: 
the aim of 
this section is to answer the question by exhibiting 
explicit equations of such a K3 surface. 

The group $G_\bra$ contains the group $N$ of diagonal matrices with coefficients 
in $\mub_2$ as a normal subgroup (so $N \simeq (\mub_2)^6$) 
and we have $G_\bra/N \simeq \AG_5$. It is easy to see that
\equat\label{eq:cvn}
\CM[V]^N=\CM[x_1^2,x_2^2,x_3^2,x_4^2,x_5^2,x_6^2].
\endequat
The following facts are checked with {\sc Magma}:
\begin{itemize}
\itemth{a} As a $G_\bra/N$-module, $\CM[V]_2^N = S_1 \oplus S_2$, where 
$S_1$ and $S_2$ are the two non-isomorphic irreducible representations 
of $G_\bra/N \simeq \AG_5$ of dimension $3$. 

\itemth{b} Let $\phi=(1+\sqrt{5})/2$ be the golden ratio. If we set 
$$
\begin{cases}
q_1=x_1^2 + x_4^2 -\phi x_5^2 +  \phi x_6^2,\\
q_2=x_2^2 - \phi x_4^2 + x_5^2 -\phi x_6^2,\\
q_3=x_3^2 + \phi x_4^2 - \phi x_5^2  + x_6^2,
\end{cases}$$
then $(q_1,q_2,q_3)$ is a basis of $S_1$.
\end{itemize}
We then define
$$X_\bra=\ZC(q_1,q_2,q_3).$$
The next proposition can be proved using {\sc Magma}, but we will 
provide a proof independent of {\sc Magma} computations.

\smallskip

\begin{pro}\label{pro:x-bra}
The scheme $X_\bra$ is smooth, irreducible, of dimension $2$.
\end{pro}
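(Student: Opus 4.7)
The proposition bundles three claims: smoothness, dimension two, and irreducibility. My plan is to establish smoothness first via the Jacobian criterion, from which dimension two follows automatically (the scheme has codimension exactly $3$ wherever the Jacobian has rank $3$), and then to deduce irreducibility from the Lefschetz hyperplane theorem for smooth complete intersections of positive dimension in $\PM^n$. The Jacobian
\[
J=\begin{pmatrix}
x_1 & 0 & 0 & x_4 & -\phi x_5 & \phi x_6\\
0 & x_2 & 0 & -\phi x_4 & x_5 & -\phi x_6\\
0 & 0 & x_3 & \phi x_4 & -\phi x_5 & x_6
\end{pmatrix}
\]
must therefore be shown to have rank three at every point $p$ of $X_\bra$. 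Since $(q_1,q_2,q_3)$ spans the $G_\bra$-submodule $S_1\subset\CM[V]_2$, the ideal they generate is $G_\bra$-stable, so $X_\bra$ and its singular locus are $G_\bra$-invariant, and the double transitivity~(\ref{eq:double-transitif}) of $G_\bra$ on $\{H_1,\ldots,H_6\}$ allows me to send any one (resp.\ any two) vanishing coordinates to $\{x_1=0\}$ (resp.\ $\{x_1=x_2=0\}$).

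I organise the analysis by the number $k$ of vanishing coordinates at $p$. For $k=0$ the minor on columns $1,2,3$ equals $x_1x_2x_3\neq 0$. For $k=1$ assume $x_1=0$; the minors on columns $\{2,3,j\}$, $j=4,5,6$, equal $x_2x_3x_j$ times $1,-\phi,\phi$ respectively, so if all three vanish then $x_4=x_5=x_6=0$ and $q_2=x_2^2=0$ contradicts $k=1$. For $k=2$ assume $x_1=x_2=0$; using $\phi^2=\phi+1$, equivalently $\phi(\phi-1)=1$, the minors on columns $\{3,4,5\}$, $\{3,4,6\}$, $\{3,5,6\}$, $\{4,5,6\}$ simplify to $-\phi\,x_3x_4x_5$, $x_3x_4x_6$, $x_3x_5x_6$, and $\phi\,x_4x_5x_6$, and since $x_3,x_4,x_5,x_6$ are all nonzero at such a point, $J(p)$ has rank three. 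For $k\geq 3$ the equations $q_1=q_2=q_3=0$ restricted to the stratum $\{x_i=0:i\in I\}$ form a linear system in the surviving squares $\{x_j^2:j\notin I\}$, whose coefficient matrix is a submatrix of
\[
M=\begin{pmatrix}
1 & 0 & 0 & 1 & -\phi & \phi\\
0 & 1 & 0 & -\phi & 1 & -\phi\\
0 & 0 & 1 & \phi & -\phi & 1
\end{pmatrix}.
\]
I would then enumerate the (few) $G_\bra$-orbits of $k$-subsets of $\{1,\ldots,6\}$ and, in each case, verify by an explicit determinantal check, hinging on
\[
\det\!\begin{pmatrix}1 & -\phi & \phi\\ -\phi & 1 & -\phi\\ \phi & -\phi & 1\end{pmatrix}=\phi\neq 0
\]
together with the nonvanishing of its $2\times 2$ subdeterminants, that the unique solution is $x_j^2=0$ throughout, so the stratum contains no projective point of $X_\bra$.

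Once smoothness is established, $X_\bra$ is a smooth complete intersection of three quadrics in $\PM^5$ of dimension exactly $2$; being of positive dimension, the Koszul resolution of $\OC_{X_\bra}$ in $\OC_{\PM^5}$ yields $H^0(X_\bra,\OC_{X_\bra})=\CM$ (Lefschetz for complete intersections), so $X_\bra$ is connected, and smooth plus connected gives irreducible. The principal obstacle is the bookkeeping in the $k\geq 3$ step: the $G_\bra$-orbits on $3$-, $4$- and $5$-subsets of $\{1,\ldots,6\}$ must be enumerated and a rank check performed in each, but every such check reduces to the determinantal identities just displayed and the elementary fact that $\phi\notin\{0,\pm 1\}$, making the computation mechanical; alternatively, the whole verification could be delegated to \textsc{Magma}, as the authors indicate.
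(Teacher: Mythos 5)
Your proof is correct and is in essence the paper's own argument: smoothness via the Jacobian criterion after using the double transitivity of $G_\bra$ on the coordinate hyperplanes, with everything reducing to the nonvanishing of the $3\times 3$ minors of the coefficient matrix $M$ (equivalently, of $\det\bigl(\begin{smallmatrix}1 & -\phi & \phi\\ -\phi & 1 & -\phi\\ \phi & -\phi & 1\end{smallmatrix}\bigr)=\phi$ and its $2\times 2$ subminors), and irreducibility from ``smooth complete intersection of positive dimension $\Rightarrow$ connected $\Rightarrow$ irreducible,'' which is exactly the paper's citation of Hartshorne, Exercise III.5.5. The organizational differences are minor but worth noting. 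The paper gets the dimension for free from the isomorphism $X_\bra/N\simeq\PM^2(\CM)$ of~(\ref{eq:xbran}), whereas you extract it from the rank of the Jacobian; both are fine. For smoothness the paper argues more uniformly: it first observes that any point of $X_\bra$ has at least two nonzero coordinates, reduces to $x_1x_2\neq 0$, and then the single statement ``all $3\times3$ minors of $M$ are nonzero'' forces $x_3=\cdots=x_6=0$ and the contradiction $q_1(p)=x_1^2\neq 0$. Your stratification by the number $k$ of vanishing coordinates makes the small-$k$ cases very explicit but obliges you to prove separately that the strata with $k\geq 3$ meet $X_\bra$ trivially; carried out in full, that emptiness check requires the invertibility of the same $3\times3$ submatrices of $M$, so you end up doing the paper's computation anyway, just distributed across orbit representatives. (The paper also offers a second, Magma-free route to irreducibility via the quotient $X_\bra/H$ and the fact that $G_\bra'$ has no subgroup of index $2$, $4$ or $8$, which you do not need.)
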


\smallskip

The variety $X_\bra$ is then an irreducible 
smooth complete intersection 
of three quadrics in $\PM^5(\CM)$, so it is a K3 surface. Since the vector space 
$S_k$ is stable under the action 
of $G_\bra$, the K3 surface $X_\bra$ is endowed with a faithful action of 
$PG_\bra \simeq \langle t \rangle \ltimes  M_{20}$.

\smallskip

\begin{cor}\label{cor:x-bra}
$X_\bra$ is a K3 surface endowed with a faithful action of $PG_\bra$. 
\end{cor}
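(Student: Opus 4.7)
The plan is to deduce Corollary~\ref{cor:x-bra} almost immediately from Proposition~\ref{pro:x-bra}, once three observations are in place: the K3 condition, the stability of $X_\bra$ under $G_\bra$, and the faithfulness of the induced projective action.

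First, I would verify that $X_\bra$ is a K3 surface. Since Proposition~\ref{pro:x-bra} gives that $X_\bra$ is a smooth, irreducible surface, and since it is cut out by three quadrics in $\PM^5(\CM)$, the adjunction formula for a complete intersection yields
\[
K_{X_\bra} = \bigl(K_{\PM^5(\CM)} + \OC(2)+\OC(2)+\OC(2)\bigr)\big|_{X_\bra} = \OC(-6+6)\big|_{X_\bra} = \OC_{X_\bra}.
\]
Simple connectedness follows from the Lefschetz hyperplane theorem applied iteratively to the three quadric hypersurfaces, so $X_\bra$ is indeed a K3 surface.

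Next, I would observe that $G_\bra$ stabilizes $X_\bra$. This is immediate from the definition: by construction $S_1 = \CM q_1 \oplus \CM q_2 \oplus \CM q_3$ is a $G_\bra$-submodule of $\CM[V]_2$, so the homogeneous ideal $(q_1,q_2,q_3)$ is $G_\bra$-stable and the subscheme $X_\bra = \ZC(q_1,q_2,q_3)$ is $G_\bra$-stable. Since scalars act trivially on $\PM^5(\CM)$, this induces an action of $PG_\bra$ on $X_\bra$.

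The main point to verify is faithfulness. I would first show that $X_\bra$ is non-degenerate in $\PM^5(\CM)$. Since $X_\bra$ is a smooth complete intersection of three quadrics, its homogeneous ideal equals the saturated ideal $I=(q_1,q_2,q_3)$, which is generated in degree~$2$. Hence no non-zero linear form lies in $I$, so $X_\bra$ is contained in no hyperplane and its affine cone spans $\CM^6$. Now suppose $[g] \in PG_\bra$ acts trivially on $X_\bra$, and lift $[g]$ to $g \in G_\bra$. Then $g$ fixes every point of $X_\bra$, i.e. every element of the cone over $X_\bra$ is an eigenvector of $g$. Since this cone spans $\CM^6$, the matrix $g$ must be a scalar, so $[g]$ is trivial in $PG_\bra$.

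The only step that requires care is the non-degeneracy argument; alternatively one can appeal to the irreducibility of $V^*$ as a $G_\bra$-representation, which forces $\bigcap_{g \in G_\bra} g(H)$ to be empty for any hyperplane $H$ containing $X_\bra$, again yielding a contradiction. Either way, combining these three observations gives that $X_\bra$ is a K3 surface admitting a faithful action of $PG_\bra$, as claimed.
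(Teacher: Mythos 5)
Your proposal follows essentially the same route as the paper: Proposition~\ref{pro:x-bra} plus the standard fact that a smooth irreducible complete intersection of three quadrics in $\PM^5(\CM)$ is a K3 surface, together with the $G_\bra$-stability of $S_1$. The one place where you go beyond the paper is faithfulness, which the paper simply asserts in the sentence preceding the corollary; your non-degeneracy argument is a genuine addition and is essentially correct, but the step ``every element of the cone is an eigenvector and the cone spans $\CM^6$, hence $g$ is scalar'' is not valid as literally stated (the coordinate axes span $\CM^6$ and consist of eigenvectors of any diagonal matrix). You need one more line: the cone over $X_\bra$ is irreducible (by Proposition~\ref{pro:x-bra}) and is contained in the finite union of eigenspaces of $g$, hence lies in a single eigenspace $E_\lambda(g)$; since it spans $\CM^6$ this forces $E_\lambda(g)=\CM^6$, i.e.\ $g=\lambda\,\Id$. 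With that amendment the argument is complete, and arguably more self-contained than the paper's.
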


We show first the following:

\begin{pro}\label{quot_bra}
Let $H=N\cap G'_{BH}$, then the scheme $X_\bra/H$ is a K3 surface (with $A_1$ singularities) which is a double cover of $\mathbb{P}^2(\mathbb{C})$
 ramified on the union of $6$ lines in general position.
\end{pro}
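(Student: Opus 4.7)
My approach is to pin down $H$ as the even-parity subgroup of $N$, compute its invariants, and then realize $X_\bra/H$ as a double cover of a projective plane $\Pi\subset\PM^5_y$ branched along six lines. The check that these six lines are in general position reduces, via an $A_5$-symmetry argument, to a single determinant calculation.

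\emph{Step 1 (the group $H$).} Since $G_\bra'=G_\bra\cap\Sb\Lb_6(\CM)$ (recalled in the text) and a diagonal $\pm 1$-matrix has determinant equal to the product of its entries, we get
$$
H\;=\;N\cap G_\bra'\;=\;\bigl\{\diag(\epsilon_1,\dots,\epsilon_6) : \epsilon_i\in\{\pm 1\},\ \epsilon_1\epsilon_2\cdots\epsilon_6 = 1\bigr\},
$$
the even-parity subgroup of $N$, of order $32$. Setting $y_i:=x_i^2$ and $w:=x_1x_2x_3x_4x_5x_6$, we have $\CM[V]^N=\CM[y_1,\dots,y_6]$ by~\eqref{eq:cvn}, and $w$ is $H$-invariant but sent to $-w$ by the non-trivial coset of $H$ in $N$, so
$$
\CM[V]^H\;=\;\CM[y_1,\dots,y_6,w]\big/\bigl(w^2-y_1y_2y_3y_4y_5y_6\bigr).
$$

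\emph{Step 2 (double-cover structure).} Because the $q_i$'s are polynomials in the $y_j$'s and $-\Id\in H$ acts trivially on $\PM^5$, we have $X_\bra/H=X_\bra/PH$ with $|PH|=16$. The squaring map $[x_1:\cdots:x_6]\mapsto[x_1^2:\cdots:x_6^2]$ realizes the quotient $\PM^5\to \PM^5/PN\cong \PM^5_{[y_1:\cdots:y_6]}$; under this map $X_\bra/PN$ is the linear section $\Pi:=\ZC(q_1,q_2,q_3)\subset\PM^5_y$, a projective plane (the three $q_i$ are linearly independent in the $y_j$'s). The intermediate quotient $PN/PH\simeq \mub_2$ then exhibits $X_\bra/H$ as a double cover of $\Pi\cong\PM^2$, embedded in $\PM(1,1,1,1,1,1,3)$ by the additional equation $w^2=y_1\cdots y_6$. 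The branch locus is the image in $\Pi$ of the fixed locus on $X_\bra$ of a representative (e.g.\ $\diag(-1,1,\dots,1)$) of the non-trivial coset of $PH$; this fixed locus is $X_\bra\cap\{x_1=0\}$ (the isolated fixed point $[1:0:\cdots:0]$ is not on $X_\bra$, since $q_1(1,0,\dots,0)\ne 0$), mapping to $\ell_1:=\Pi\cap\{y_1=0\}$. Running over all six coordinates, the branch locus is $\bigcup_{i=1}^{6}\ell_i$, a union of six lines in $\Pi$.

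\emph{Step 3 (general position).} It remains to check no three of the $\ell_i$'s are concurrent; equivalently, for each triple $\{i,j,k\}\subset\{1,\dots,6\}$, the $3\times 3$ submatrix of the coefficient matrix of $(q_1,q_2,q_3)$ (in the basis $(y_1,\dots,y_6)$) whose columns are indexed by the complement of $\{i,j,k\}$ is non-singular. By~\eqref{eq:double-transitif}, the quotient $G_\bra/N\simeq A_5 \cong \mathrm{PSL}_2(\FM_5)$ acts on $\{H_1,\dots,H_6\}$, and this action is transitive on $3$-subsets (in $\mathrm{PSL}_2(\FM_5)$ the stabilizer of $\{0,1,\infty\}$ has order $3$, giving orbit-size $60/3=20=\binom{6}{3}$). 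Combined with $G_\bra/N$-equivariance of the span of the $q_i$'s, this implies that all twenty such determinants differ only by non-zero scalars, so it suffices to check one: for $\{i,j,k\}=\{1,2,3\}$, direct expansion using $\phi^2=\phi+1$ yields the value $\phi\neq 0$.

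\emph{Step 4 (conclusion).} Six lines in general position in $\PM^2$ form a sextic with exactly $\binom{6}{2}=15$ ordinary nodes; its double cover is then classically a K3 surface with one $A_1$-singularity above each node and no further singularities, completing the proof. The main obstacle is Step~3: the $A_5$-symmetry argument reduces what is a priori a check of $20$ determinants to a single computation.
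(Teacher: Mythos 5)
Your overall strategy is the same as the paper's: identify $H$ as the even-parity subgroup of $N$, compute $\CM[V]^H=\CM[y_1,\dots,y_6,w]/(w^2-y_1\cdots y_6)$, and exhibit $X_\bra/H$ as the double cover of the plane $\Pi=\{q_1=q_2=q_3=0\}$ branched along the six lines $\Pi\cap\{y_i=0\}$. Steps 1, 2 and 4 are correct; the paper simply eliminates $y_1,y_2,y_3$ and writes the cover explicitly as $z^2=y_4y_5y_6(-y_4+\phi y_5-\phi y_6)(\phi y_4-y_5+\phi y_6)(-\phi y_4+\phi y_5-y_6)$ in $\PM(1,1,1,3)$, leaving the general-position check implicit, whereas you make it explicit.

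The gap is in Step 3. The doubly transitive action of $G_\bra/N\simeq A_5\simeq \mathrm{PSL}_2(\FM_5)$ on the six hyperplanes is \emph{not} transitive on $3$-subsets. The stabilizer of $\{0,1,\infty\}\subset\PM^1(\FM_5)$ has order $6$, not $3$: the map $z\mapsto 1/z$ has determinant $-1=2^2$, a square in $\FM_5$, so it lies in $\mathrm{PSL}_2(\FM_5)$; as a permutation of the six points it is $(0\,\infty)(2\,3)$, hence even, and it induces the transposition $(0\,\infty)$ on $\{0,1,\infty\}$. (The error is in assuming that an odd permutation of the triple must come from an odd permutation of all six points.) The orbit of a triple therefore has size $60/6=10$ and there are two orbits on $3$-subsets, as a Burnside count confirms: $\tfrac{1}{60}(1\cdot 20+15\cdot 4+20\cdot 2+24\cdot 0)=2$. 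Your single determinant thus certifies only ten of the twenty triples. The repair is cheap: check a representative of each orbit, or just observe that all twenty complementary $3\times 3$ minors of the coefficient matrix are easily seen to equal $\pm 1$ or $\pm\phi$ (for instance the minors for the triples $\{1,2,3\}$ and $\{4,5,6\}$ are $\phi$ and $1$ respectively -- two genuinely different values, consistent with two orbits), hence are all non-zero. With that correction your proof is complete and matches the paper's.
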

\begin{proof}
Note that 
$$\CM[x_1,x_2,x_3,x_4,x_5,x_6]^H = 
\CM[x_1^2,x_2^2,x_3^2,x_4^2,x_5^2,x_6^2,x_1x_2\cdots x_6],$$
so that 
$\PM^5(\CM)/H=\{[y_1:\cdots:y_6:z] \in \PM(1,\dots,1,3)~|~z^2 =\prod_{k=1}^6 y_k\}$. 
Therefore,
$$X_\bra/H = \{[y_1:\cdots:y_6:z] \in \PM(1,\dots,1,3)~|~z^2 =\prod_{k=1}^6 y_k$$
$$~\text{and}~
\begin{cases}
y_1+y_4-\phi y_5 + \phi y_6 = 0 \\
y_2-\phi y_4 + y_5 -\phi y_6 = 0 \\
y_3 + \phi y_4 - \phi y_5 + y_6=0\\ 
\end{cases} \quad\}$$
Simplifying the equations, one gets
$$
X_\bra/H =\{[y_4:y_5:y_6:z] \in \PM(1,1,1,3)~|~\hphantom{AAAAAAAAAAAAAAAAA}\\$$
$$\hphantom{AAAAAA}
z^2 =y_4y_5y_6(-y_4+\phi y_5 - \phi y_6)
(\phi y_4 - y_5 +\phi y_6)(-\phi y_4 + \phi y_5 - y_6)\}. 
$$
So $X_\bra/H$ is a K3 surface (with $A_1$ singularities) 
which is a double cover of $\PM^2(\CM)$ 
ramified on the union of $6$ lines in general position as claimed.  
\end{proof}

\smallskip

\begin{proof}[Another proof of Proposition~\ref{pro:x-bra}]
 First, it follows from~(\ref{eq:cvn}) that 
\equat\label{eq:xbran}
X_\bra/N = \{[y_1:\cdots:y_6] \in \PM^5(\CM)~|~
\begin{cases}
y_1+y_4-\phi y_5 + \phi y_6 = 0 \\
y_2-\phi y_4 + y_5 -\phi y_6 = 0 \\
y_3 + \phi y_4 - \phi y_5 + y_6=0\\ 
\end{cases} \} \simeq \PM^2(\CM).
\endequat
Hence $X_\bra/N$ has dimension $2$, so $X_\bra$ has dimension $2$. 
Then one can use \cite[Exercice III, 5.5]{hartshorne} to see that $X_\bra$ is connected, so that if it is smooth then it is irreducible. 
We prove smoothness below, but we can also argue in the way as follows.

\medskip

By Proposition \ref{quot_bra} the quotient $X_\bra/H$ is irreducible. This shows that $H$ acts transitively 
on the irreducible components of $X_\bra$. So $G_\bra'$ also 
acts transitively on the irreducible components. Now, let $X$ be an 
irreducible component 
of $X_\bra$ and let $K$ denote its stabilizer in $G_\bra'$. 
Then $8=\deg(X_\bra)=\deg(X) \cdot |G_\bra'/K|$. 
Since $G_\bra'$ has no subgroup of index $2$, $4$ or $8$, 
we conclude that $K=G_\bra'$, so that $X=X_\bra$, as desired.

\medskip

We now show that $X_\bra$ is smooth. Let $p=[x_1:x_2:x_3:x_4:x_5:x_6] \in X_\bra$ 
and assume that $p$ is a singular point of $X_\bra$. Since $p$ belongs to $X_\bra$, 
the equations show that at least two of the $x_k$'s are non-zero. By replacing 
if necessary $p$ by another point in its $G_\bra$-orbit, we may assume that 
$x_1x_2 \neq 0$ (thanks to~(\ref{eq:double-transitif})). 
The Jacobian matrix of $(q_1,q_2,q_3)$ at $p$ is given by
$${\mathrm{Jac}}_p(q_1,q_2,q_3)=
\begin{pmatrix}
2x_1 & 0 & 0& 2x_4 & -2\phi x_5 & 2\phi x_6 \\
0& 2x_2 &0 & -2\phi x_4 & 2x_5 & -2\phi x_6 \\
0&0& 2x_3 & 2\phi x_4 & -2\phi x_5 & 2 x_6 \\
\end{pmatrix}.$$
Then the rank 
of ${\mathrm{Jac}}_p(q_1,q_2,q_3)$ is less than $3$, which means 
that all its minors of size $3$ vanish. Therefore,
$$x_{i_1}x_{i_2}x_{i_3}=0$$
for all $1 \le i_1 < i_2 < i_3 \le 6$. Since $x_1x_2 \neq 0$, 
we get $x_3=x_4=x_5=x_6=0$. But then $q_1(p) \neq 0$, 
which is impossible.
\end{proof}

\bigskip

\begin{remark}\label{rem:x-x'}
Exchanging $S_1$ and $S_2$ (whose characters are Galois conjugate 
under $\sqrt{5} \mapsto -\sqrt{5}$), one gets another K3 surface $X_\bra'$, 
where $\phi$ is replaced by its Galois conjugate $\phi'=(1-\sqrt{5})/2=1-\phi$ 
in the equations. 
Let $\s \in \Gb\Lb_6(\CM)$ be the matrix 
$$\s=
\begin{pmatrix}
1 & 0 & 0 & 0 & 0 & 0 \\
0 & 0 & 0 & 0 & 0 & -i \\
0 & 0 & i & 0 & 0 & 0 \\
0 & 1 & 0 & 0 & 0 & 0 \\
0 & 0 & 0 & i & 0 & 0 \\
0 & 0 & 0 & 0 & 1 & 0 \\
\end{pmatrix}.$$
Then $\s$ normalizes $G_\bra$ and $\s(X_\bra)=X_\bra'$, 
so that $X_\bra$ and $X_\bra'$ are isomorphic.\finl
\end{remark}

\bigskip

The surface $X_\bra$ is a K3 surface with polarization $L_8$ 
satisfying $L_8^2=8$, 
and as in section \ref{sec:g29} this is invariant by the action of $M_{20}$. 
We have hence an embedding of $\langle 8 \rangle$ in $\Lb_{X_\bra}^{M_{20}}$. 

\bigskip

\begin{pro}\label{prop:q8-reseau}
With the above notation, we have:
\begin{enumerate}
\itemth{1} The transcendental lattice of $X_\bra$ is a 
rank two lattice 
given by
$$
\Tb_{X_\bra}=\left(\begin{array}{cc}
8&4\\
4&12\\
\end{array}
\right)
$$
and $NS(X_\bra)^{M_{20}}=\mathbb{Z}\, L_8$ with $L_8^2=8$.

\itemth{2} The complete intersection $X_\bra$ is the unique K3 surface 
invariant for a faithful action of $M_{20}$ in $\IP^5(\CM)$. 
\end{enumerate}
\end{pro}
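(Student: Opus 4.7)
My plan is to apply Theorem~\ref{involutions} to $X_\bra$ for part~(1) and to mirror the strategy of Proposition~\ref{degree4}(2) for part~(2). For part~(1), I would first verify the hypotheses of Theorem~\ref{involutions}: let $\iota$ be the image of $t$ in $PG_\bra$. By the split exact sequence~(\ref{eq:brandhorst}), $\iota$ normalizes $M_{20}=PG_\bra'$ and satisfies $\iota^2=1 \in M_{20}$. Moreover $\iota$ acts non-symplectically on $X_\bra$: $M_{20}$ is perfect and therefore acts symplectically, and if $\iota$ were also symplectic then $PG_\bra$, of order $1\,920>960$, would act symplectically and faithfully, contradicting Mukai's bound. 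Theorem~\ref{involutions} thus applies with $G=PG_\bra$, and $(NS(X_\bra)^{M_{20}}, \Tb_{X_\bra})$ must be one of the three listed pairs. To single out the correct case I would use the hyperplane class $L_8$ of the embedding $X_\bra\subset \PM^5(\CM)$: it is $M_{20}$-invariant since the action is induced by a linear action on $\CM^6$, and its self-intersection equals the degree of $X_\bra$ as a transverse complete intersection of three quadrics, namely $2^3=8$. Since neither $\langle 4\rangle$ nor $\langle 40\rangle$ contains a vector of square $8$ (the equations $4k^2=8$ and $40k^2=8$ have no integer solutions), only case~(3) is compatible with the existence of $L_8$; the same numerical observation shows $L_8$ is primitive and generates $NS(X_\bra)^{M_{20}}$.

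For part~(2) I would take a non-degenerate K3 surface $Y\subset \PM^5(\CM)$ invariant under a faithful linear $M_{20}$-action, with hyperplane class $L$. Then $L$ is an $M_{20}$-invariant polarization and the embedding is given by the complete linear system $|L|$; the Saint-Donat formula $h^0(L)=L^2/2+2$ combined with $h^0(L)=6$ forces $L^2=8$. Hence $\langle 8\rangle$ embeds primitively into $\Lb_Y^{M_{20}}\simeq \Lb_{20}$ (the primitivity is again obtained by the numerical argument of part~(1)), and by Lemma~\ref{lem:plongements} the embedding is unique up to isometry. Its orthogonal complement in $\Lb_{20}$ is therefore isometric to $\Tb_{X_\bra}$, so Corollary~\ref{cor:parity} together with the Shioda--Mitani uniqueness identifies $Y$ with $X_\bra$ as a polarized K3 surface. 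Since each surface is canonically embedded in $\PM(H^0(L)^*)\simeq \PM^5(\CM)$ by its complete linear system, this abstract isomorphism is realized by an element of $\Pb\Gb\Lb_6(\CM)$, so $Y$ and $X_\bra$ are projectively equivalent.

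The main difficulty I anticipate is the degree identification $L^2=8$ in part~(2): one must carefully invoke the genus-degree dictionary for K3 polarizations, which here rests on non-degeneracy of the $M_{20}$-equivariant embedding $Y \hookrightarrow \PM^5(\CM)$ and on the classification provided by Theorem~\ref{involutions} (which constrains the square of any primitive $M_{20}$-invariant class to lie in $\{4,8,40\}$). Once this is in place, the remainder of part~(2) follows directly from Lemma~\ref{lem:plongements} and Shioda--Mitani, in complete parallel with the argument of Proposition~\ref{degree4}(2).
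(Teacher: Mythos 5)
Your argument is correct and follows essentially the same route as the paper: part (1) is read off from Theorem~\ref{involutions} (with the relevant case singled out by the degree‑$8$ hyperplane class), and part (2) repeats the argument of Proposition~\ref{degree4}(2) via the unique primitive embedding of $\langle 8 \rangle$ into $\Lb_{20}$ and the Shioda--Mitani uniqueness. You in fact supply more detail than the paper's two-line proof, which simply cites Theorem~\ref{involutions} for (1) and says that (2) follows by the same argument as Proposition~\ref{degree4}.
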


\begin{proof}
(1) has been proved in~Theorem~\ref{involutions}.

\medskip

(2) follows from the same argument as in Proposition \ref{degree4}. 
\end{proof}

\medskip

\begin{remark}
Proposition~\ref{prop:q8-reseau} gives another proof that 
$X_\bra\cong X_\bra'$.\finl
\end{remark}

\medskip

\begin{pro}
The K3 surface $X_\bra$ is the Kummer surface $\kummer(E_{\tau}\times E_{2\tau})$, 
with $\tau_1=\frac{-1+i\sqrt{5}}{2}$.
\end{pro}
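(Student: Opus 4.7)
The plan is to directly specialize the Shioda--Mitani construction from the proof of Corollary~\ref{cor:parity} to the transcendental lattice of $X_\bra$ already computed in Proposition~\ref{prop:q8-reseau}(1). That corollary already produces a presentation of any K3 surface with a faithful $M_{20}$-action as a Kummer surface of a product of two elliptic curves, together with explicit formulas for the two period parameters in terms of the entries of the transcendental lattice. The entire task therefore reduces to an explicit identification of those two elliptic curves in the case at hand.

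In the notation of the proof of Corollary~\ref{cor:parity}, the matrix
$$\Tb_{X_\bra}=\begin{pmatrix} 8 & 4 \\ 4 & 12 \end{pmatrix}$$
corresponds to the parameters $a=2$, $b=2$, $c=3$, with discriminant $\Delta = b^2-4ac = 4-24 = -20$, so $\sqrt{\Delta}=2i\sqrt{5}$. Substituting into the Shioda--Mitani formulas, I would obtain
$$\tau_1 = \frac{-b+\sqrt{\Delta}}{2a} = \frac{-1+i\sqrt{5}}{2} = \tau,\qquad \tau_2 = \frac{b+\sqrt{\Delta}}{2}=1+i\sqrt{5}.$$
Corollary~\ref{cor:parity} then yields $X_\bra \cong \kummer(E_{\tau_1}\times E_{\tau_2})$.

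It remains to match $E_{\tau_2}$ with $E_{2\tau}$. Since $2\tau = -1+i\sqrt{5}$, we have $\tau_2 - 2\tau = 2 \in \ZM$, so the lattices $\ZM \oplus \ZM\tau_2$ and $\ZM \oplus \ZM\cdot(2\tau)$ coincide in $\CM$; hence $E_{\tau_2}\cong E_{2\tau}$. Combining this with the previous step gives $X_\bra \cong \kummer(E_\tau\times E_{2\tau})$, as claimed. Uniqueness of the abelian surface is built into the invocation of Corollary~\ref{cor:parity} via~\cite[Theorem~5.1]{shiodamitani}. There is essentially no obstacle here: once Proposition~\ref{prop:q8-reseau} is available, the identification is a one-step specialization followed by the cosmetic observation that adding $2$ to $\tau_2$ does not change the period lattice.
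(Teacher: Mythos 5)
Your proposal is correct and follows exactly the route the paper intends: the paper's own proof is the one-line remark that the result ``follows from Corollary~\ref{cor:parity} and its proof,'' and you have simply carried out that specialization explicitly, with the parameters $a=2$, $b=2$, $c=3$, $\Delta=-20$ giving $\tau_1=\tau$ and $\tau_2=1+i\sqrt{5}=2\tau+2$, whence $E_{\tau_2}\cong E_{2\tau}$. The computation checks out in every step.
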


\medskip

\begin{proof}
This follows from Corollary~\ref{cor:parity} and its proof.
\end{proof}

\medskip
% 
% \begin{remark}\label{rem:unicite}
% By \cite[Theorem 5.1]{shiodamitani}, 
% if $A'$ is another abelian surface such that 
% $\kummer(A')=\kummer(E_{\tau_1}\times E_{\tau_2})$ then $A'$ is isomorphic to 
% $E_{\tau_1}\times E_{\tau_2}$.\finl
% \end{remark}
% 
% \medskip

\begin{remark}[Smooth rational curves]\label{curves bra}
Using {\sc Magma}, one can find an explicit 
Nikulin configuration in $X_\bra$ as follows. 
Let $C$ denote the conic defined by the equations
$$\begin{cases}
x_5=\sqrt{\phi} x_1,\\
x_4=\sqrt{\phi} x_2,\\
x_3=\sqrt{\phi} x_6,\\
x_1^2-x_2^2-x_6^2=0
\end{cases}$$
and let $\AC$ denote the subgroup of $G_\bra$ generated by 
$${\tiny\begin{pmatrix}
0 & 0 & 1 & 0 & 0 & 0 \\
0 & 0 & 0 & 0 & i & 0 \\
-1& 0 & 0 & 0 & 0 & 0 \\
0 & 0 & 0 & i & 0 & 0 \\
0 & i & 0 & 0 & 0 & 0 \\
0 & 0 & 0 & 0 & 0 & -i \\
\end{pmatrix},\quad
\begin{pmatrix}
0 & 0 &-i & 0 & 0 & 0 \\
0 &-i & 0 & 0 & 0 & 0 \\
-i& 0 & 0 & 0 & 0 & 0 \\
0 & 0 & 0 & 0 & 0 &-i \\
0 & 0 & 0 & 0 & i & 0 \\
0 & 0 & 0 &-i & 0 & 0 \\
\end{pmatrix},}$$
$${\tiny\begin{pmatrix}
1 & 0 & 0 & 0 & 0 & 0 \\
0 & 1 & 0 & 0 & 0 & 0 \\
0 & 0 & 1 & 0 & 0 & 0 \\
0 & 0 & 0 &-1 & 0 & 0 \\
0 & 0 & 0 & 0 & 1 & 0 \\
0 & 0 & 0 & 0 & 0 &-1 \\
\end{pmatrix}\quad\text{\normalsize and}\quad
\begin{pmatrix}
1 & 0 & 0 & 0 & 0 & 0 \\
0 &-1 & 0 & 0 & 0 & 0 \\
0 & 0 & 1 & 0 & 0 & 0 \\
0 & 0 & 0 &-1 & 0 & 0 \\
0 & 0 & 0 & 0 &-1 & 0 \\
0 & 0 & 0 & 0 & 0 &-1 \\
\end{pmatrix}}
$$
Then $C$ is contained in $X_\bra$. It can be checked with {\sc Magma} 
that its $G_\bra$-orbit contains $80$ elements, and that 
its $\AC$-orbit contains $16$ elements which are two by two
disjoint (note that $|\AC|=32$, that $\mub_2 \subset \AC$ 
and that $\AC/\mub_2$ is elementary abelian).

Note also that the conic defined by the equations 
$$\begin{cases}
x_1 + ix_5 - i\phi x_6=0,\\
x_3 - i\phi x_5 + i\phi x_6=0,\\
x_4 - \phi x_5 + x_6=0,\\
x_2^2 - 2\phi x_5^2 + 2(1+\phi)x_5 x_6 - 2\phi x_6^2=0,
\end{cases}$$
is contained in $X_\bra$, and that its $G_\bra$-orbit contains 
$96$ elements. However, we can only extract subsets of $12$ 
two by two disjoint conics from this orbit.\finl
\end{remark}

\medskip

\section{Final Remarks}

\medskip

\begin{pro}
The K3 surfaces $X_\mukai$, $X_\bra$ and $X_\kondo$ 
are two by two non-isomorphic.
\end{pro}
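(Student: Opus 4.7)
The plan is to distinguish the three surfaces by a discrete invariant, namely the discriminant of the transcendental lattice. If two K3 surfaces are isomorphic, then their transcendental lattices are isometric; in particular, the discriminants of their transcendental lattices must coincide.

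First, I would recall the three transcendental lattices, which have already been computed in this paper: by Proposition~\ref{quartic_kondo} together with the discussion preceding it, $\Tb_{X_\kondo} = \mathrm{diag}(4,4)$; by Proposition~\ref{degree4}, $\Tb_{X_\mukai} = \mathrm{diag}(4,40)$; and by Proposition~\ref{prop:q8-reseau}, $\Tb_{X_\bra}$ is the rank-two lattice with Gram matrix $\bigl(\begin{smallmatrix} 8 & 4 \\ 4 & 12 \end{smallmatrix}\bigr)$.

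Next I would simply compute the three discriminants: they are $16$, $160$ and $80$ respectively. Since these three integers are pairwise distinct, the three lattices are pairwise non-isometric, and hence the three K3 surfaces are pairwise non-isomorphic.

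There is no real obstacle here, since everything needed has already been established. For robustness, I would also note an alternative argument using the Shioda--Mitani classification: the three surfaces were identified in the previous sections as Kummer surfaces of non-isomorphic products of elliptic curves (namely $E_i\times E_i$, $E_{i\sqrt{10}}\times E_{i\sqrt{10}}$, and $E_\tau\times E_{2\tau}$ with $\tau=(-1+i\sqrt{5})/2$), and by~\cite[Theorem~5.1]{shiodamitani} non-isomorphic abelian surfaces of this form yield non-isomorphic Kummer surfaces. Either argument suffices to conclude.
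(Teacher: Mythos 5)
Your proof is correct and follows essentially the same route as the paper, which also distinguishes the three surfaces by their transcendental lattices; your computation of the discriminants ($16$, $160$, $80$) is a clean way to certify that the lattices are pairwise non-isometric.
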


\medskip

\begin{proof}
Indeed, they do not have the same transcendental lattice 
(or equivalently they do not admit
polarizations of the same degree).
\end{proof}

\medskip

\begin{pro}\label{unicity}
If a K3 surface $X$ admits a faithful action of $G_{\kondo}$, $PG_{\mukai}$, 
respectively $PG_{\bra}$ then $X$ is isomorphic to $X_{\kondo}$, $X_{\mukai}$, 
respectively $X_{\bra}$.
\end{pro}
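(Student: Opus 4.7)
My plan is to combine Theorem~\ref{involutions} with the Shioda-Mitani classification of singular K3 surfaces and reduce the statement to a group-theoretic matching.

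First, for each $G\in\{G_\kondo,PG_\mukai,PG_\bra\}$ I check the hypotheses of Theorem~\ref{involutions}: $M_{20}$ is normal in $G$, and $G$ contains a non-symplectic automorphism $\iota$ with $\iota^2\in M_{20}$ normalizing $M_{20}$. For $G\in\{PG_\mukai,PG_\bra\}$ take any lift of the non-trivial element of $G/M_{20}\simeq\mub_2$ from (\ref{eq:pg29}) or (\ref{eq:brandhorst}); for $G_\kondo$ take $\iota=g^2$ where $g$ lifts a generator of $G_\kondo/M_{20}\simeq\mub_4$ from (\ref{eq:kondo}), so that $\iota^2\in M_{20}$ and $\iota$ is non-symplectic (it acts on $\omega_X$ by $-1$). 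Theorem~\ref{involutions} then forces $\Tb_X$ into one of three explicit lattices.

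Since $M_{20}$ acts faithfully and symplectically on $X$, Xiao~\cite[Nr.~81, Table~2]{xiao} gives $\rk\Pic(X)=20$, so $X$ is a singular K3 surface. The Shioda-Mitani theorem~\cite[Theorem~5.1]{shiodamitani} asserts that such a surface is determined up to isomorphism by its transcendental lattice, so $X$ is isomorphic to exactly one of $X_\kondo$, $X_\mukai$, $X_\bra$, which realize the three possible lattices and are pairwise non-isomorphic by the Proposition preceding this one.

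It remains to match $G$ to the correct K3. For $G_\kondo$ this is immediate from Kondo's theorem~\cite[Theorem~1]{kondo}, which says $X_\kondo$ is the unique K3 surface admitting a faithful action of a group of order $3840$. For $G\in\{PG_\mukai,PG_\bra\}$, the strategy is to exhibit a $G$-invariant polarization $L$ on $X$ of a degree compatible with a natural low-dimensional projective representation of $G$. Since $PG_\mukai$ arises as the quotient of the complex reflection group $G_{29}$ and carries a $4$-dimensional faithful projective representation, one expects a $PG_\mukai$-invariant $L$ with $L^2=4$, embedding $X$ as a $PG_\mukai$-invariant quartic in $\PM^3(\CM)$; by Proposition~\ref{degree4}(2) this forces $X\cong X_\mukai$. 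Analogously, $PG_\bra$ has a natural $6$-dimensional faithful projective representation and should yield a $PG_\bra$-invariant $L$ with $L^2=8$, embedding $X$ as a complete intersection of three quadrics in $\PM^5(\CM)$, which by Proposition~\ref{prop:q8-reseau}(2) is $X_\bra$.

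The main obstacle is this last matching step: proving rigorously that the only case of Theorem~\ref{involutions} compatible with a faithful $PG_\mukai$-action is case~(2), and similarly case~(3) for $PG_\bra$. This requires a character-theoretic analysis comparing the faithful irreducible projective representations of $PG_\mukai$ and $PG_\bra$ of small dimension with the $G$-equivariant structure on $H^0(X,L)$, ruling out the other two cases for each group. The non-isomorphism~(\ref{eq:non-iso}) guarantees that $PG_\mukai$ and $PG_\bra$ are distinguished by such invariants, so that they must correspond to different cases of Theorem~\ref{involutions}; the detailed analysis, which is essentially a finite computation using \textsc{Magma}, is carried out in Brandhorst-Hashimoto~\cite{BH} and may simply be invoked.
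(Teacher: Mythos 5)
Your reduction to the statement that $X$ must be isomorphic to one of $X_\kondo$, $X_\mukai$, $X_\bra$ is sound and is essentially how the paper proceeds: the non-trivial coset of $M_{20}$ in $G$ supplies the non-symplectic $\iota$ required by Theorem~\ref{involutions}, and the three resulting transcendental lattices pin down $X$ among singular K3 surfaces. The problem is the matching step, which you correctly identify as ``the main obstacle'' but then do not prove. The assertion that ``one expects a $PG_\mukai$-invariant $L$ with $L^2=4$'' has no justification: an abstract faithful action of $PG_\mukai$ on a K3 surface $X$ only gives a rank-one invariant lattice $\NS(X)^{M_{20}}=\ZM L$ with $L^2\in\{4,8,40\}$ depending on which of the three cases occurs, and the mere existence of a $4$-dimensional faithful projective representation of $PG_\mukai$ does not force $L^2=4$ --- that is exactly the content of what must be shown. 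Deferring this to Brandhorst--Hashimoto~\cite{BH} defeats the purpose, since the point of the proposition is precisely to give an argument independent of that paper.

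The paper closes the gap by a different, purely group-theoretic mechanism, which you should adopt. Suppose $PG_\mukai$ acts faithfully on $X_\kondo$. Then $PG_\mukai$ and $G_\kondo$ both preserve the ample class $L_{40}$ generating $\NS(X_\kondo)^{M_{20}}$, so the subgroup of $\Aut(X_\kondo)$ they generate preserves an ample class and is therefore finite by~\cite[Proposition~5.3.3]{Huybrechts}; Kondo's maximality then forces $PG_\mukai$ to embed into $G_\kondo$, contradicting the {\sc Magma}-verified fact~(\ref{eq:inclusion}) that $PG_\mukai$ is not isomorphic to any subgroup of $G_\kondo$. If instead $PG_\mukai$ acts on $X_\bra$, the same finiteness-plus-maximality argument forces $PG_\mukai\cong PG_\bra$, contradicting~(\ref{eq:non-iso}); the argument for $PG_\bra$ is symmetric. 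So the matching rests on two finite group-theoretic computations plus the Huybrechts finiteness criterion, not on a character-theoretic analysis of $H^0(X,L)$, and without some such argument your proof is incomplete.
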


\medskip

\begin{proof}
For $G_{\kondo}$ this is shown in \cite[Lemma 3.1]{kondo}. Before going on, 
note the following fact, which can easily be checked with {\sc Magma}:
\equat\label{eq:inclusion}
\text{\it The groups $PG_\mukai$ and $PG_\bra$ are not isomorphic to subgroups of 
$G_\kondo$.}
\endequat
Consider now the group 
$G_{\mukai}$, then $PG_{\mukai}/M_{20}=\langle \iota \rangle$ and 
$\iota$ acts non-symplectically, hence $X$ is one of the three surfaces of 
Theorem \ref{involutions} and $PG_{\mukai}$ leaves invariant
the polarization, hence it is realized by linear transformations. 
We only need to show that $X_{\kondo}$ and $X_{\bra}$ do not admit
an automorphism group isomorphic to $PG_{\mukai}$. Assume it is the case, 
then $PG_{\mukai}$ and $G_{\kondo}$ leaves invariant the polarization of 
degree $\langle 40 \rangle$ on $X_{\kondo}$, hence by~\cite[Proposition 5.3.3]{Huybrechts} the group that they generate together is finite. By the maximality of $G_{\kondo}$ this means that 
$PG_{\mukai}$ is contained in $G_{\kondo}$ but by~(\ref{eq:inclusion}) 
the group $G_{\kondo}$ does not contain such a subgroup. With a similar argument 
if $PG_{\mukai}$  acts on $X_{\bra}$ then we conclude that $PG_{\mukai}\cong PG_{\bra}$ and this is not the case by~(\ref{eq:non-iso}). 
The same argument holds 
for $PG_{\bra}$. 
\end{proof}

\medskip

\begin{theorem}\label{max}
Let $G$ be a maximal finite group with a faithful and non--symplectic action on
a K3 surface 
$X$ and assume that $M_{20}\subset G$. Then $G$ is isomorphic
to $G_{\kondo}$, $PG_{\mukai}$ or $PG_{\bra}$.
\end{theorem}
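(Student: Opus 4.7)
The plan is to combine the preceding lattice-theoretic and group-theoretic results to force $X$ into the list $\{X_\kondo, X_\mukai, X_\bra\}$ and then use maximality of $G$ to identify it with the corresponding automorphism group. First I would pin down the symplectic part $G_0$ of $G$: because $M_{20}$ is perfect, its image in the abelian quotient $G/G_0 \hookrightarrow \CM^\times$ is trivial, so $M_{20} \subseteq G_0$, and Mukai's bound $|G_0| \le 960$ forces $G_0 = M_{20}$. Writing $m = [G:M_{20}]$, the quotient $G/M_{20}$ is a finite cyclic subgroup of $\CM^\times$. Kondo's theorem gives $|G| \le 3\,840$, so $m \le 4$, and non-symplecticity gives $m \ge 2$. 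To rule out $m = 3$, I would observe that $G/M_{20}$ must act faithfully on $\Lb_X^{M_{20}} \simeq \Lb_{20}$: if some $g \in G \setminus M_{20}$ acted trivially on $\Lb_X^{M_{20}}$, it would act trivially on $\Tb_X$, hence on $\Hrm^{2,0}(X) \subseteq \Tb_X \otimes \CM$, contradicting the non-triviality of the action on $\omega_X$. Since $|\Aut(\Lb_{20})| = 16$ by Remark~\ref{rem:l20}, the integer $m$ divides $16$, whence $m \in \{2,4\}$.

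Second, I would apply Theorem~\ref{involutions}. Because $G/M_{20}$ is cyclic of even order, it contains a unique element of order $2$; lifting it to some $\iota \in G$ gives a non-symplectic automorphism with $\iota^2 \in M_{20}$ normalizing $M_{20}$. Theorem~\ref{involutions} applied to $H = \langle \iota \rangle \cdot M_{20}$ then constrains $\Tb_X$ to one of the three listed forms. By Corollary~\ref{cor:parity} (together with the Shioda--Mitani uniqueness cited in its proof), $X$ is determined up to isomorphism by $\Tb_X$, so comparison with Sections~\ref{sec:kondo}, \ref{sec:g29} and \ref{sec:q8} yields $X \in \{X_\kondo, X_\mukai, X_\bra\}$.

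Finally, maximality of $G$ pins down its isomorphism class. In each of the three cases, both $G$ and the corresponding known group $\G \in \{G_\kondo, PG_\mukai, PG_\bra\}$ preserve the unique (up to sign) $M_{20}$-invariant ample class $L \in \{L_{40}, L_4, L_8\}$ on $X$: indeed, $G$ normalizes $M_{20}$ and $\NS(X)^{M_{20}}$ has rank one, so $G$ acts on $\mathbb{Z}L$ by $\pm 1$ with the sign resolved by the K\"ahler cone. By \cite[Proposition~5.3.3]{Huybrechts} the subgroup $\langle G, \G \rangle \subseteq \Aut(X)$ is then finite. Since $\G$ is a maximal finite subgroup of $\Aut(X)$ (for $G_\kondo$ by Kondo's theorem, for $PG_\mukai$ by the remark following Proposition~\ref{degree4}, and for $PG_\bra$ by the same order comparison using Kondo's bound and the pairwise non-isometry of the three transcendental lattices), we deduce $\langle G, \G \rangle = \G$, hence $G \subseteq \G$; the maximality hypothesis on $G$ then yields $G \cong \G$. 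The most delicate step will be the faithfulness argument ruling out $m = 3$, which rests on the compatibility between the action on $\omega_X$ and the action on $\Tb_X$; the concluding maximality assertion for $PG_\bra$, though parallel to the $PG_\mukai$ case, also deserves an explicit verification since it is not recorded in the preceding sections.
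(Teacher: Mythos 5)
Your proposal is correct and follows essentially the same route as the paper: bound $[G:M_{20}]\le 4$ via Kondo, exclude index $3$ by the faithful action of $G/M_{20}$ on $\Lb_{20}$ whose isometry group has order $16$, reduce the index-$2$ case to Theorem~\ref{involutions}, and conclude by the ample-class/finiteness argument of Proposition~\ref{unicity} together with maximality. The extra details you supply (that $G_0=M_{20}$, the faithfulness of the action on $\Lb_X^{M_{20}}$, and the maximality of $PG_\bra$ in $\Aut(X_\bra)$) are correct elaborations of steps the paper leaves implicit.
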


\medskip

\begin{proof}
Since $G$ acts non-symplectically then $G/M_{20}$ is non-trivial 
and by \cite{kondo} it has order at most four. 
If $|G/M_{20}|=4$ then
$G\cong G_{\kondo}$ by \cite{kondo}. Observe that the group $G/M_{20}$ 
acts faithfully on $\Lb_{20}$ since it contains $\Tb_X$. 
By Remark~\ref{rem:l20}, 
the group of isometries of $\Lb_{20}$  has order $2^4$ so it is not 
possible to have $|G/M_{20}|=3$. We are left with the case 
$|G/M_{20}|=2$. By Theorem \ref{involutions} the K3 surface $X$ is isomorphic to $X_{\kondo}$, $X_{\mukai}$ or $X_{\bra}$. By the same argument as 
in Proposition \ref{unicity} and the maximality of $G$, then $G$ is isomorphic to $PG_{\mukai}$ or $PG_{\bra}$.  
\end{proof}

\bibliographystyle{amsplain}
\bibliography{Biblio}

\end{document}